\newtheorem{thm}{Theorem}[section]
\newtheorem{Remark}{Remark}[section]
\theoremstyle{definition}
\theoremstyle{Remark}
\newtheorem{lemma}[thm]{Lemma}
\theoremstyle{lemma}
\newtheorem{corollary}[thm]{Corollary}
\theoremstyle{corollary}
\newtheorem{proposition}[thm]{Proposition}
\theoremstyle{proposition}
\numberwithin{equation}{section}
\def\O{\Omega}
\def\o{{\omega}}
\def\F{{\mathcal F}}
\def\l{\lambda}
\def\p{\partial}
\def\s{{\sigma}}
\def\e{\epsilon}
\def\a{\alpha}
\def\d{\delta}
\def\th{\theta}
\def\R{{\mathbb{R}}}
\def\sni{\smallskip\noindent}
\def\pr{^\prime}
\def\dpr{^{\prime\prime}}
\providecommand{\norm}[1]{\lVert#1\rVert}
\begin{document}
\title{The Enskog Process}
\author{S. Albeverio\footnote{Institute of Applied Mathematics and HCM, BiBoS, IZKS, University of  Bonn, Germany. Email: albeverio@iam.uni-bonn.de}
,  B. R\"udiger\footnote{Bergische Universit\"at Wuppertal Fakult\"at 4 -Fachgruppe Mathematik-Informatik,
Gauss str. 20, 42097 Wuppertal, Germany. Email: ruediger@uni-wuppertal.de}
 and P. Sundar \footnote{Department of Mathematics, Louisiana State University, Baton Rouge, La 70803, USA. Email: sundar@math.lsu.edu}} 
\date{ }
\maketitle
\begin{abstract}
\noindent   The existence of a weak solution to a McKean-Vlasov type stochastic differential system corresponding to the Enskog equation of the kinetic theory of gases  is established under suitable hypotheses. The distribution of  any solution to the system at each fixed time is shown to be unique. The existence of a probability density for the time-marginals of  the velocity is verified  in the case where the initial condition is  Gaussian, and is shown to be the density of an invariant measure. \\

\noindent {\sc{Key words:}} Boltzmann and Enskog equations; McKean-Vlasov stochastic differential system; invariant measure.\\

\noindent \sc{AMS Subject Classifications}: 60G51, 60K35, 35S10.
\end{abstract}
\section{Introduction}
The Boltzmann equation describes the time evolution of the density function in a phase (position-velocity) space for a classical particle (molecule) 
under the influence of  other particles in a diluted (or rarified) gas \cite{Bo} (evolving in  vacuum for a given  initial distribution). It forms the basis for 
the kinetic theory of gases, see, for e.g. \cite{Ce4}. 

\par
If $f$ is the density function, which depends on time $t \ge 0$, the space variable $x \in \R^d$, and the velocity variable $u \in \R^d$ of the point particle, 
then $f(t, x, u)dx\,du$ is by definition the probability for the particle to have position $x$ in a volume element $dx$ around $x$ and velocity $u$ within the volume element $du$ around $u$. For a single type of particles all of mass $m > 0$, in the absence of external forces, the Boltzmann equation has the general form 
\begin{equation}
\frac{ \partial f}{\partial t} (t, x, u)+ u \cdot \nabla_x f(t, x, u) = Q(f, f)(t,x,u), \label{eqn1.1}
\end{equation}
where $Q$ is a certain quadratic form in $f$, called collision operator (or integral). 

\par 
Set $\Lambda := \R^3\times (0, \pi] \times [0, 2\pi)$. Then $Q$ can be written in the general form 
\begin{equation}
Q(f, f) (t,x,u)= \int_{\Lambda} \{f(t, x, u^\star) f(t, x, v^\star)-f(t, x, u) f(t, x, v)\}B(u, dv, d\th)d\phi. \label{eqn1.2}
\end{equation}
It is assumed that any gas particle travels straight until an elastic collision occurs with another particle.  Each $v \in \R^3$ in (\ref{eqn1.2}) denotes the velocity of an incoming  particle which may hit, at the fixed location $x\in \mathbb{R}^3$,  particles whose velocity is fixed as 
$u\in \mathbb{R}^3$. 
Let $u^\star \in \R^3$ and $v^\star \in \R^3$ denote the resulting outgoing velocities corresponding to the incoming velocities $u$ and  $v$ respectively. 
$\theta$ $\in (0,\pi]$ denotes the azimuthal or colatitude angle of the deflected velocity, $v^{\star}$ (see \cite{Ta}).  Having determined $\theta$, 
the longitude angle $\phi \in [0, 2\pi)$ measures in polar coordinates,  the location of  $v^*$, and hence  that of $u^\star$, as explained below. 
 
\noindent In the Boltzmann model as the collisions are assumed to be elastic,  conservation of kinetic energy as well as momentum of the molecules holds, i.e. 
 considering particles of mass $m =1$, the following equalities hold:
\begin{equation}
\left\{
\begin{aligned}
u^\star+v^\star =u+v\\
(u^\star)^2+(v^\star)^2 =u^2+v^2 
\end{aligned}
\right. \label {eqn1.3}
\end{equation} 
\begin{equation} 
\left\{
\begin{aligned}
v^\star=v+ ({\bf n},u-v) {\bf n}\\
u^\star=u-({\bf n},u-v){\bf n}
\end{aligned}
\right. \label {eqn1.4}
\end{equation}
   
\noindent where 
\begin {equation}\label {eqn1.5}
{\bf n}=\frac{v^\star-v}{|v^\star-v|}
\end {equation}
 where $(\cdot, \cdot)$ denotes the scalar product, and $| \cdot |$, the Euclidean norm in $\mathbb{R}^3$.

\noindent \begin{Remark} \label {Remoutcomevel} The Jacobian of the transformation (\ref {eqn1.4}) has determinant 1 and  $(u^\star)^\star=u$ since the collision dynamics are reversible.\end {Remark}

\noindent The outgoing velocity  $u^*$ is then uniquely determined in terms of the colatitude angle $\theta\in (0,\pi]$ measured from the center, and longitude angle $\phi \in  [0,2\pi)$ of  the deflection  vector ${\bf n}$ in the sphere with northpole $u$ and southpole $v$ centered at $\frac{u+v}{2}$, which are used in equation (\ref{eqn1.1}) and \eqref{eqn1.2} (see  e.g. \cite {Br},  \cite {HK}, \cite {Ta2}).

\par 
$B(u, dv, d\th)$ is a $\sigma$-additive positive measure defined on the Borel $\sigma$-field ${\mathcal B}(\R^3) \times {\mathcal B}((0, \pi])$, depending 
 (Lebesgue) measurably on $u \in \R^3$. The form of $B$ depends on the version of Boltzmann equation one has in mind. In Boltzmann's original work 
 \cite{Bo}, 
\begin{align}
B(u, dv, d\th) &= |(u-v)\cdot n| dv d\th \notag\\
& = |u - v| dv \cos \left(\frac{\th}{2}\right) \sin \left(\frac{\th}{2}\right) d\th,\label{eqn1.6}
\end{align}
where in \eqref{eqn1.6} we used that $\frac{\pi}{2}- \frac{\th}{2}$ is the angle between $u-v$ and ${\bf n}$, so that 
\begin{equation} |(u-v,{\bf n})|= |u-v|\cos(\frac{\pi}{2}- \frac{\th}{2})= |u-v| \sin(\frac{\th}{2}), \label{eqscalar}\end{equation}
\begin{equation} \;\;\text{and} \;\;\;|(u-v,{\bf n})|d{\bf n}=|u-v| \sin(\frac{\th}{2})\cos(\frac{\th}{2})d\th d\phi=B(u, dv, d\th)d\phi\label{eqcross}\end{equation} is the differential cross section scattering the  velocities $v$ of incoming particles colliding with the particle with velocity $u$, written in polar coordinates.

\par
In the case where the molecules interact by a force which varies as the $n$th inverse power of the distance between their centers, one has 
\cite{Ce4}, 
\begin{equation}
B(u, dv, d\th) = |(u-v)|^{\frac{n-5}{n-1}} \beta(\th)dvd\th \label{eqn1.7}
\end{equation}
where $\beta$ is a Lebesgue measurable positive function of $\th$. In particular, for $n = 5$, one has the case of ``Maxwellian molecules", where 
\[
B(u, dv, d\th) = \beta(\th) dv d\th.
\]
The function $\beta(\th)$ decreases and behaves like $\th^{- 3/2}$ for $\th \downarrow 0$, see, for e.g. \cite{Ce4}. Note that 
in the latter case $\int_0^{\pi}\beta(\th)d\th =+ \infty$. We note that for Maxwellian particles the cross section $B(u, dv, d\th)d\phi$ does not depend on the modulus $|u-v|$ of the velocity difference between the velocity $u$ of the particle and the velocities $v$ of incoming particles.

\par In the present paper, we shall mainly assume that
\begin{equation}
B(u, dv, d\th) = \sigma (|u - v|) dv Q(d\th) \label{eqn1.8}
\end{equation}
where  $Q$ is a $\sigma$-finite measure on ${\mathcal B}((0, \pi])$, and $\sigma$ is a bounded, Lipschitz continuous, positive function on $\R^+$.  The assumptions on $\sigma$, though an improvement on the existing results for the case $\sigma =1$, do  restrict applicability to physically realizable molecules.
When $Q(d\th)$ is taken to be integrable, one speaks of a cut-off function.

\noindent
\begin{Remark} \label{Rem1.2}
Both the rigorous derivation of Boltzmann equation from a microscopic model, and the study of existence, uniqueness and properties of solutions of the Boltzmann equation still present many challenging and open problems. For the derivation problem, see, for e.g., \cite{CIP}.  
\end{Remark}

Morgenstern \cite{Mo} ``mollified" $Q(f, f)$ by replacing it by 
\begin{align*}
&Q_M(f, f)(t,x,u) \\
& = \int \{f(t, x, u^\star) f(t, y, v^\star)-f(t, x, u) f(t, y, v)\}K_M(x, y)B(u, dv, d\th) dy d\phi
\end{align*}
with some measurable $K_M$ and $B$    such that $K_M(x, y)B(u, dv, d\th)$ has a bounded density with respect to Lebesgue measure $dv \times d\th$, and 
obtaining a global existence theorem in $L^1(\R^3 \times \R^3)$. Povzner \cite{Pov} obtained existence and uniqueness in the space of Borel measures in $x$, with  the term $K_M(x, y)B(u, dv, d\th) dy d\phi$ replaced by $K_P(x-y, u-v)dv dy$ (with  a suitable reinterpretation of the relations between 
$x, u^*, v^*$ and $y, u, v$, and suitable moments assumptions on $K_P$). 

\par
According to \cite{Ce4} (p. 399), this modification of Boltzmann's equation by Povzner is ``{\it close to physical reality}". Cercignani also notes that Povzner equation has a form similar to the {\it Enskog equation} for dense gases, which we shall discuss below.  Modification in  another direction consists in taking the space of velocities  as discrete, and is discussed in \cite{Ce4} (pp. 399-401). 

\par
The majority of further mathematical results concerns the spatially homogeneous case, where the initial condition on $f$ is assumed to be independent of the space variable $x$, so that at all times $f$ itself does not depend on  $x$. For such results see, e.g. \cite{Ce4}, \cite{Ce7}. 

\par
Let us now associate to \eqref{eqn1.1}, \eqref{eqn1.2} its weak (in the functional analytic sense) version. The following proposition is instrumental in this direction.

\par
Using also  Remark \ref{Remoutcomevel}, Tanaka  \cite {Ta} proved the following result that is important for the weak formulation of the equation.
\begin {proposition} \label {PropAppTanaka}  
Let $\Psi(x,u) \in C_0(\mathbb{R}^6)$, as a function of $x\in \mathbb{R}^3$, $u\in \mathbb{R}^3$.  With $B$ as in \eqref{eqn1.8}, we have
\begin {equation} 
\begin {split}
&\int_{\mathbb{R}^9\times (0,\pi]\times [0,2 \pi)} \Psi(x,u)f(t, x, u^\star) f(t, x, v^\star) B(u, dv, d\th) dx du  d\phi
\\&= \int_{\mathbb{R}^9\times (0,\pi]\times [0,2 \pi)} \Psi(x,u^\star)f(t, x, u) f(t, x, v) B(u, dv, d\th) dx du d\phi \label{eqn1.9}
\end {split}
\end {equation}
\end {proposition}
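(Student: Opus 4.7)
The plan is to perform, pointwise in the angular parameters $(\theta,\phi)$, the change of variables $(u,v) \mapsto (u^\star, v^\star)$ on the inner $(u,v)$-Lebesgue integral, and to exploit three structural features of the elastic collision that have already been recorded in the excerpt. First, by Remark \ref{Remoutcomevel} the map in (\ref{eqn1.4}) has unit absolute Jacobian, so $du\,dv = du^\star\,dv^\star$. Second, the energy–momentum conservation (\ref{eqn1.3}), combined with $u^\star + v^\star = u + v$, gives $u^\star \cdot v^\star = u \cdot v$ and hence $|u^\star - v^\star|^2 = |u-v|^2$, so that the factor $\sigma(|u-v|)$ appearing in (\ref{eqn1.8}) is invariant under the substitution. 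Third, the collision is reversible, so $(u^\star)^\star = u$ and $(v^\star)^\star = v$, again by Remark \ref{Remoutcomevel}.

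With these three facts in hand, the computation is routine. Writing the left-hand side of (\ref{eqn1.9}) using (\ref{eqn1.8}) explicitly,
\begin{equation*}
\int \Psi(x,u)\, f(t,x,u^\star)\, f(t,x,v^\star)\, \sigma(|u-v|)\, dx\, du\, dv\, Q(d\theta)\, d\phi,
\end{equation*}
Fubini reduces the identity to the inner $(u,v)$-integral at each fixed $(x,\theta,\phi)$. Setting $(u',v') := (u^\star, v^\star)$, the three items above yield $du\,dv = du'\,dv'$, $\sigma(|u-v|) = \sigma(|u'-v'|)$, and, via the involution, $\Psi(x,u) = \Psi(x,(u')^\star)$ together with $f(t,x,u^\star) = f(t,x,u')$ and $f(t,x,v^\star) = f(t,x,v')$. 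Relabeling $u' \to u$, $v' \to v$ in the transformed integral delivers exactly the right-hand side of (\ref{eqn1.9}).

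The delicate point is verifying that the three invariance properties remain compatible with the parameterization of the unit vector $\mathbf{n}$ in (\ref{eqn1.5}) by the angles $(\theta,\phi)$, since the north–south axis of the sphere on which $v^\star$ lies is $(u-v)/|u-v|$, while after the substitution it becomes $(u^\star - v^\star)/|u^\star - v^\star|$, a different direction in general. The saving geometric observation is that the two spheres coincide as subsets of $\R^3$ (same center $(u+v)/2$ and same radius $|u-v|/2$); moreover, the replacement $\mathbf{n} \mapsto -\mathbf{n}$ leaves the collision formulas (\ref{eqn1.4}) unchanged and sends the colatitude back to itself, while any residual azimuthal rotation is absorbed by the fact that $\phi$ is integrated over the full period $[0,2\pi)$. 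This is precisely the geometric content worked out in Tanaka's argument in \cite{Ta} and distilled in Remark \ref{Remoutcomevel}.
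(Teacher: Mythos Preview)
Your proposal is correct and follows essentially the same approach that the paper indicates: the paper does not give a detailed proof but simply records that the result ``is proven using equation (\ref{eqn1.4}) and Remark \ref{Remoutcomevel}'' (citing \cite{Ta}), which is precisely the unit-Jacobian change of variables and the involution property you invoke. Your more explicit treatment of the invariance of $\sigma(|u-v|)$ via (\ref{eqn1.3}) and of the angular parameterization via the $\phi$-periodicity spells out details the paper leaves to the reference.
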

The above result  is proven using equation \eqref{eqn1.4} and Remark \ref{Remoutcomevel} \cite{Ta}.
From now on, we will assume that $B$ is as in \eqref{eqn1.8}. 

\smallskip \noindent {\bf Weak formulation of the Boltzmann equation}

Consider the Boltzmann equation \eqref{eqn1.1} with collision operator \eqref{eqn1.2}. We multiply \eqref{eqn1.1} by a function  $\psi$  (of $(x, u) \in \mathbb{R}^6$)
 belonging to  $C^1_0(\mathbb{R}^6)$, 
and integrate with respect to $x$ and $u$. Using integration by parts and Proposition \eqref{PropAppTanaka}, we arrive at the weak form of the Boltzmann equation: 
\begin{align}
&\int_{\mathbb{R}^6} \psi(x,u) \frac{ \partial f}{\partial t} (t, x, u)dxdu - \int_{\mathbb{R}^6} f(t, x, u)(u,  \nabla_x \psi(x,u)) dxdu \notag\\
&= \int_{\mathbb{R}^6} f(t, x, u) L_f\psi(x,u) dxdu \label {WBE}
\end {align}
 for all $t\in \mathbb{R}_+$  with
\[
L_f\psi(x,u)= \int_{\mathbb{R}^3\times (0,\pi]\times [0,2 \pi)} \{\psi(x,u^\star)- \psi(x,u)\}f(t, x, v)B(u, dv, d\th)d\phi,
\]
where $B$ is as in \eqref{eqn1.8}.

\par
To proceed further, let us introduce an  approximation to the weak form of the Boltzmann equation by introducing a smooth real-valued function $\beta$  
(which should not be confused with the one appearing in \eqref{eqn1.7}) with compact support defined on $\R^1$:
 \begin{align}
& \int_{\mathbb{R}^6} \psi(x,u) \frac{ \partial f}{\partial t} (t, x, u)dxdu - \int_{\mathbb{R}^6} f(t, x, u)(u,  \nabla_x \psi(x,u)) dxdu\notag \\
 & = \int_{\mathbb{R}^6} f(t, x, u) L^{\beta}_f\psi(x,u) dxdu 
 \label{WBEA} 
 \end{align}
 for all $\psi \in C^1_0(\mathbb{R}^6)$ and for all $t\in \mathbb{R}_+ $ with $L^{\beta}_f\psi(x,u) $
\[
= \int_{\mathbb{R}^6\times (0,\pi]\times [0,2 \pi)} \{\psi(x,u^\star)- \psi(x,u)\}f(t, y,v) \beta(|x - y|)dyB(u, dv, d\th) d\phi . 
\]
Heuristically, when $\beta \to \delta_0$, then any solution of \eqref{WBEA} tends to a solution of Boltzmann's equation \eqref{WBE}, so that 
$\beta$ can be seen as a regularization for \eqref{WBE}. 

\par
Equation \eqref{WBEA} is thus the (functional analytic) weak form of an equation closely related to the Boltzmann equation, which can be written as 
\begin{equation}
\frac{\p f}{\p t} f(t, x, u) + u \cdot \nabla_xf(t, x, u) = Q_E^{\beta}(f, f) (t,x,u), \label{ens}
\end{equation}
with 
\begin{align*}
&Q_E^{\beta}(f, f) (t,x,u) \\
& = \int_{\Lambda} \int_{\R^3} \{f(t, y, u^*)f(t, x, v^*) - f(t, y, u)f(t, x, v)\} \beta(|x - y|)dy B(u, dv, d\th)d\phi.
\end{align*}

In the case where $\beta$ is replaced by the characteristic function (or a smooth version of it like in \cite{Ce4}) of a ball of radius $\epsilon > 0$, this is Enskog's equation used for (moderately) ``dense gases" taking into account interactions at distance $\epsilon$ between molecules. For Enskog's equation, see e.g. \cite{Ce4}, \cite{Ce3}, \cite{PS} (pp. 6, 14), \cite{En1}, \cite{En2}, \cite{Ar4}, 
\cite{AC2}, \cite{Ce5}, \cite{Po2}, \cite{BL}, \cite{BT2}. For versions of the equation in a bounded region, see \cite{AC}, \cite{Po1}. 
The relationship between the Enskog and the Boltzmann equation have been discussed in several publications. In particular, their asymptotic equivalence (with respect to the support of $\beta$ shrinking to $\{0\}$) has been discussed in \cite{BL}. In \cite{PS}, 
a pointwise limit has been established. 

\par
If $\mu_t$ denotes the Borel probability measure on $\R^6$ corresponding to a smooth density function $f(t,x, u)$, 
i.e.
\[
\mu_t(dx, du) = f(t, x, u) dx du,
\] 
then the equation \eqref{WBEA} can be 
written as
\begin{equation}
\frac{\p}{\p t} \langle \mu_t, \psi \rangle - \langle \mu_t, (u, \nabla_x \psi(x,u))\rangle = \langle \mu_t, L^{\beta}_{\mu_t}\psi \rangle \label{MtWBEA}
\end{equation}
where $L^{\beta}_{\mu_t}\psi(x,u) $
\[
= \int_{\mathbb{R}^6\times (0,\pi]\times [0,2 \pi)} \{\psi(x,u^\star)- \psi(x,u)\} \beta(|x - y|)\mu_t(dy, dv)B(u, dv, d\th)d\phi.  
\]
In the above, we have used the sharp bracket $\langle \cdot, \cdot \rangle$ to denote integration with respect to $\mu_t$ while $(\cdot , \cdot )$ denotes the inner product in $\R^3$. If $\mu_t$ satisfies \eqref{ens}, we say that $\mu_t$ is a weak solution of the Enskog equation.

\medskip \noindent
 Define the space  $\mathbb{D}:= \mathbb{D}(\mathbb{R}_+,\mathbb{R}^3)$ as the  space of all  right continuous functions with left limits defined  on 
$[0, \infty)$ taking values in $\mathbb{R}^3$, and equipped with the topology induced by the Skorohod metric (see e.g. \cite {Bi}). 
We denote the value of any $\omega \in \mathbb{D}$ at any time $s$ by  $\omega_s$ or $\omega(s)$.  Likewise,  the time marginal of a Borel probability measure 
$\mu$ on $\mathbb{D}$ will be denoted by $\mu_s$ for all $s \in [0, \infty)$. The measure $\mu_s$ will be a Borel probability measure on $\mathbb{R}^3$. We will use similar notations for functions in $\mathbb{D}\times \mathbb{D}$ and for Borel measures on $\mathbb{D}\times \mathbb{D}$. 

\medskip
\noindent
If the measure $\mu_t$ in \eqref{MtWBEA} is the marginal at time $t$ of a Borel probability measure $\mu$ on $\mathbb{D}\times \mathbb{D}$, then we can write the Enskog equation 
\eqref{MtWBEA} as follows:
\begin{equation}
\frac{\p}{\p t} \langle \mu, \psi(x_t, u_t) \rangle - \langle \mu, (u_t, \nabla_{x_t} \psi(x_t, u_t))\rangle = \langle \mu, L^{\beta}_{\mu}\psi(x_t, u_t) \rangle \label{MWBEA}
\end{equation}
where $x_t, u_t$ are $t$-coordinates in $\mathbb{D}\times \mathbb{D}$ and $L^{\beta}_{\mu}\psi(x_t, u_t)$
\[
= \int_{U_0} \{\psi(x_t, u_t^\star)- \psi(x_t, u_t)\}\s(|u_t - v_t|) \beta(|x_t - y_t|)\mu(dy, dv)Q(d\theta) d\phi,  
\]
where we used the form \eqref{eqn1.8} of $B$ and the notation $U_0$ for $\mathbb{D}\times \mathbb{D}\times (0,\pi]\times [0,2 \pi)$.
In the following, we formulate the connection between equation \eqref{MWBEA} and stochastic analysis.  
\begin{Remark}
The idea of looking at solutions of certain deterministic nonlinear parabolic evolution equations in connection with probability measures describing the distributions of suitable associated Markov processes goes back to McKean \cite{McK}. For a spatial homogeneous version of our present context  for the case $\sigma=1$,  this idea has been adapted and ingeniously implemented by Tanaka \cite{Ta},\cite{Ta3}, and successively developed, 
for this case, e.g., in \cite{Ta1}, \cite{Ta2},\cite{Ta3}, \cite{Fu},  \cite{FG}, \cite{HK}. In our work, we avoid the assumption of spacial homogeneity, and we allow $\sigma$   to depend on  $|u-v|$. 
\end{Remark}
Let us first derive heuristically  the evolution of the stochastic process $(X_s, Z_s)_{s\in \mathbb{R}}$,  describing the evolution of position and velocity of a  particle evolving according to the Enskog equation \eqref{MWBEA}.  In the present context, the evolution of the velocity $(Z_s)_{s\in \mathbb{R}}$ of one particle is obtained  by integrating (or in other words "summing") the velocity displacements $\alpha(Z_s,v_s.\theta,\phi)$ with respect to a  counting measure ${N}_{X, Z}( ds,dy, dv,d\theta,d\phi)$  which  depends over a time interval   $ds$ on the distribution $\mu(dy, dx)$ in position and velocity of the gas particles, as well as the position and velocity $(X_s, Z_s)$ of the particle itself because of the presence of  particles being close enough to hit (guaranteed by the function $\beta$), and the scattering measure for the velocity $B(u,dv,d\theta)d\phi$, defined in \eqref{eqn1.8}. The position then evolves according to $X_t= X_0 + \int_0^t Z_s ds $. Let us introduce such a suitable jump-Markov process $(X_s, Z_s)_{s\in \mathbb{R}}$.
Let $\mu(dx,dv)$ be a  probability measure  on  $\mathbb{D} \times \mathbb{D}$.  Let  $\tilde {N}_{X, Z}( ds,dy, dv,d\theta,d\phi)$ be a compensated  random measure (crm) defined on a filtered probability space $(\Omega,\mathscr{F},\mathscr{F}_t,\mathrm{P})$ with compensated measure (or simply, compensator), 
\begin{equation} 
d\Gamma:=\Gamma(dy,dv,d\theta,d\phi,ds)  = \s(|Z_s - v_s|)\beta(|X_s - y_s|)\mu(dy,dv) Q(d\theta) d\phi ds
\label{eqcrm} 
\end{equation}
 on 
$\mathbb{D}\times \mathbb{D}\times (0,\pi]\times [0,2 \pi)\times \mathbb{R}_+$  (where  we recall that $Q(d\theta) $ is a $\s$ - finite measure on the Borel $\sigma$ - algebra $\mathcal {B}((0,\pi])$,  $d\phi$ is the Lebesgue measure on $\mathcal {B}([0,2 \pi))$, and $\beta$ is a $C_0^{\infty}({\mathbb R}^1)$ function with support near $0$. Here, $X, Z$ are elements of $\mathbb{D}$, and $v_s, y_s$ are 
$s$-coordinates of $v, y$ in $\mathbb{D}$.   

\par
Now, let  $(X_s,Z_s)_{s \in \mathbb{R}_+}$ be the process defined below, taking values in the Skorohod space  $\mathbb{D}\times \mathbb{D}$  with 
  the joint distribution of $(X_s,Z_s)_{s \in \mathbb{R}_+}$ denoted by $\mu(dx, dz)$:  
\begin{align}
Z_t= &Z_0 + \int_0^t \int_{\mathbb{D}\times\mathbb{D}\times (0,\pi]\times [0,2 \pi)} \alpha(Z_s, v_s, \theta, \phi) 
\tilde {N}_{X, Z}( ds,dy, dv,d\theta,d\phi) \notag\\
&+ \int_0^t \int_{\mathbb{D}\times \mathbb{D}\times (0,\pi]\times [0,2 \pi)} \alpha(Z_s, v_s, \theta, \phi) \s(|Z_s - v_s|)\beta(|X_s - y_s|) \notag\\
&\,\,\;\;\;\;\;\;\;\;\;\;\;\;\;\;\;\;\;\;\;\;\;\;\;\mu(dy, dv) Q(d\theta) d\phi ds\label{BM}\\
X_t=& X_0 + \int_0^t Z_s ds  \label{BM2}
\end{align}
The initial values $X_0$ and $Z_0$ are specified. 
We have set 
\noindent  \begin {equation} \label {kernel}
\alpha(u,v,\theta,\phi):= ({\bf n},u-v) {\bf n}, 
\end {equation}

\noindent where, as  above,  the deflection  vector ${\bf n}$ is given in spherical coordinates, i.e. in terms of  the colatitude angle $\theta\in (0,\pi]$ and longitude angle $\phi \in  [0,2\pi)$. 

\par We have obtained such  a process heuristically considering the physics governing the evolution of the particles and will prove in this article that this is the stochastic process whose  law corresponds  to the solution of the Enskog equation \eqref{MWBEA}. We remark however  that the stochastic equation  \eqref{BM},\eqref{BM2} is defined in terms of a counting measure $\tilde {N}_{X,Z}$  with random compensator $\s(|Z_s - v_s|)\beta(|X_s - y_s|)\mu(dy,dv) Q(d\theta) d\phi ds$. The mathematical  theory of point processes with random compensator has been analyzed extensively in e.g. \cite{IW}, or \cite {JS}, but as the theory of Stochastic Differential equations with Poisson random measure  is more developed and better known, we prefer here to rewrite  \eqref{BM},\eqref{BM2} in an equivalent stochastic equation written in terms of a stochastic integral w.r.t to a Poisson random measure associated to a L\'evy process, which is the following:
\begin{equation}
 \begin {split}
   Z_t &=  Z_0 + \int_0^t \int_{U} \alpha(Z_s, v_s, \theta, \phi)1_{[0,\; \s(|Z_{s} - v_s|)\beta(|X_s - y_s|)]}(r) 
d\tilde {N}^\mu\\& + \int_0^t \int_{U_0} {\alpha}(Z_s, v_s, \theta, \phi)\s(|Z_s - v_s|)\beta(|X_s - y_s|) \mu(dv, dy) Q(d\theta) d\phi  ds \label{eq-velb}
 \end {split}
 \end{equation}
 \begin{equation}
 \label{eq-spaceb}
  X_t = X_0 + \int_0^t Z_sds, 
 \end{equation}
where $\mu$ is still the law of the process $(Z_t, X_t)$, $t \ge 0$, but now  $\tilde {N}^\mu( dy,dv, d\theta d\phi,dr,ds)$ is a compensated Poisson random measure (cPrm) with Poisson measure ${N}^\mu:=$${N}^\mu( dy,dv, d\theta d\phi,dr,ds)$ and compensator $\mu(dy,dv) Q(d\theta) d\phi dr ds $ on $\mathbb{D} \times \mathbb{D} \times (0, \pi]\times [0, 2\pi)\times [0,1]\times \mathbb{R}_+ $ .

\par That \eqref{eq-velb},\eqref{eq-spaceb} and  \eqref{BM},\eqref{BM2} are equivalent equations can be shown with at least two different methods:
\begin{itemize}
\item[i)] For each Borel-subset $B$ of $\mathbb{D}\times \mathbb{D}\times (0,\pi]\times [0,2 \pi)$,  the counting measure $ {N}_{X,Z}(B\times [0,t))$ can be represented by 
\begin{equation*} \int_0^t\int_B\int_{[0,1]} 1_{[0,\; \s(|Z_{s} - v_s|)\beta(|X_s - y_s|)]}(r) 
d {N}^\mu \,.\end{equation*} This is a consequence of the following equation, which shows  the relation between  the random  compensator $\Gamma$ of the point measure ${N}_{X, Z}$ defined  \eqref{eqcrm} and the compensator $\mu(dy,dv) Q(d\theta) d\phi ds$ of the Poisson random measure  $ {N}^\mu$:
\begin{align*} &\Gamma(B\times [0,t))= \int_0^t\int_B \s(|Z_s - v_s|)\beta(|X_s - y_s|)\mu(dy,dv) Q(d\theta) d\phi ds \\&
=\int_0^t\int_B\int_{[0,1]} 1_{[0,\; \s(|Z_{s} - v_s|)\beta(|X_s - y_s|)]}(r)\mu(dy,dv) Q(d\theta) d\phi ds dr \end{align*}

\item[ii)] Given  $\psi \in C^2_0(\R^6)$, for all  $t \ge 0$,  $\Delta t \ge 0$, the It\^o formula  $\psi(X_{t+ \Delta t},Z_{t+ \Delta t})- \psi(X_t,Z_t)$  for  \eqref{eq-velb},\eqref{eq-spaceb} and  \eqref{BM},\eqref{BM2} can be proven to be exactly the same.  This implies that  \eqref{eq-velb}, \eqref{eq-spaceb} and  \eqref{BM},\eqref{BM2}  solve the same martingale problem and are the same process in weak sense. In particular the law of the corresponding process  $(Z_t, X_t)$, $t \ge 0$  solves in both cases  the  Enskog equation, as proven in Proposition \ref{PropEns} for  \eqref{eq-velb},\eqref{eq-spaceb}. The  It\^o formula for \eqref{eq-velb}, \eqref{eq-spaceb} is computed in the proof of Proposition \ref{PropEns}. It can be obtained in a similar way for \eqref{BM},\eqref{BM2} by using Theorem 2.42, Ch. II in \cite {JS}. 
\end{itemize}

\par
 We call the process $(Z_t, X_t)$, $t \ge 0$, given by \eqref{eq-velb},\eqref{eq-spaceb}, \eqref{kernel}(resp. its law $\mu = \mu_t, t \ge 0$) as the Markov process (resp. law) associated with the Enskog equation described by \eqref{MWBEA}.
Its existence  is  proven in Theorem \ref{thm-existence-vel} in  Section 2  under suitable conditions which are satisfied by some physical models. In Proposition \ref{PropEns} we will  prove that for any finite fixed time $T > 0$,  its law  $\mu = \{\mu_t\}, 0 \le  t \le T$ solves 
the Enskog equation (\ref{MWBEA}). Uniqueness of  the Markov process $(Z_t, X_t)$, $t \ge 0$ solving \eqref{eq-velb},\eqref{eq-spaceb}, \eqref{kernel}, is proven in Theorem \ref{ThmUniq} in Section 3 for the time marginals. The existence of a density $f(t,x,z)$ for the distribution $\mu = \{\mu_t\}, 0 \le t \le T$  is proven in Section 4, for the particular case where the velocity marginals are time invariant. $f(t,x,z)$ solves then the Enskog equation \eqref{WBEA}

 It is worthwhile to mention that we have not made the assumption of space homogeneity. We allow $\sigma$ that appears as the differential cross section (see equation \eqref{eqn1.8}) to depend on $|u-v|$. 
\section{Existence Results}
In this section we establish the existence  of a solution  of the system of stochastic equations \eqref{eq-velb},\eqref{eq-spaceb}, with $(\theta, \phi)$ denoted by $\xi$ that takes values in the set $\Xi := (0, \pi]\times [0, 2\pi)$. Also, 
$Q(d\theta)d\phi$ is written as $Q(d\xi)$ for notational simplicity. From the physical model, we know that  $Q(d\xi) $ should be a $\s$-finite measure, and hence taken as $\s$-finite.

\smallskip
\noindent
{\bf Hypotheses A:}

\begin{itemize}
\item[\bf{A1.}] The measure $Q$ is finite outside any neighborhood of $0$, and for all $\e > 0$, $Q$ satisfies 
\[
 \int_0^{\e} \th Q(d\th) < \infty.
\]
\item[\bf{A2.}] $\s: \R^+ \to \R^+$ (as entering \eqref{eqn1.8}) is a bounded, Lipschitz continuous function on $\R^+$.
\end{itemize}

\par There are many useful consequences of {\bf A1}. Recall that $\alpha (z, v, \xi) = ({\bf n} \cdot (z-v)){\bf n}$ with $\frac{\pi}{2} - \frac{\th}{2}$ as the angle between the vectors $(z-v)$ and $\bf{n}$ (see \eqref{eqscalar}). Hence, condition {\bf A1} implies that there exists a constant $C$ such that the following estimates hold.

\begin{equation} 
\int_{\Xi} |{\alpha}(z,v, \xi) - {\alpha}(z',v', \xi)|^2Q(d\xi) \leq C (|z-z'|^2+|v-v'|^2) \label{B1}
\end{equation}
\begin{equation}\int_{\Xi} |{\alpha}(z,v, \xi) - {\alpha}(z',v', \xi)|Q(d\xi) \leq C (|z-z'|+|v-v'|).\label{B3} 
\end{equation}
From \eqref{B1}, it follows, by setting  $z'$ and $v'$ to be $z$, and using the fact that  $\a(z, z, \xi) = 0$ for all $\xi \in \Xi$, $z \in \R^3$ that 
\begin{equation}
\int_{\Xi} | {\alpha}(z,v, \xi)|^2 Q(d\xi) \leq C|z-v|^2,\label{BB2}
\end{equation}
and hence
\begin{equation}
\int_{\Xi} | {\alpha}(z,v, \xi)|^2 Q(d\xi) \leq C(|z|^2+|v|^2).\label{B2}
\end{equation}
In a similar way, from \eqref{B3} it follows 
\begin{equation}
\int_{\Xi}  |{\alpha}(z,v, \xi)| Q(d\xi) \leq C|z-v|,\label{growth-L}
\end{equation}
and hence
\begin{equation}
\int_{\Xi}  |{\alpha}(z,v, \xi)| Q(d\xi) \leq C(|z|+|v|),\label{growth}
\end{equation}

\par
 Condition {\bf A2}  on $\s$ is required for mathematical reasons.  It is worthwhile to note that for Maxwellian molecules,  $\s$ is the constant function  identically equal to $1$.  Hence hypothesis {\bf A2}  leads to more mathematical generality  but still falls short of physical reality. 

Before we proceed further,  we recall the following: Since the function $\beta$ that appears in \eqref{WBEA} is held fixed and has been assumed to be bounded, we will set $\norm{\beta}_{\infty} = 1$. We will also take 
$\norm{\s}_{\infty} = 1$. Besides, we take the constant $C$ that appears in the estimates \eqref{B1} - \eqref{growth} to be greater than $1$ in order to avoid writing $C \vee 1$ in many of the estimates in this paper. A generic constant will be denoted by $K$ though it may vary from line to line.

 \smallskip
 Let us fix a finite time $T > 0$, and denote the Skorohod space $\mathbb{D}([0, T]; \mathbb{R}^3)$  by $\mathbb{D}$. We consider it here  equipped with the Skorohod topology. 
 Given a probability measure $\mu$ on $\mathbb{D} \times \mathbb{D}$, let $\mu_t$ denote its marginal at time $t$.  We define 
 \[
 \hat{\alpha}(z,v, \xi) := \alpha(z,v, \xi)\s(|z-v|)
 \]
 for all $z, v \in \mathbb{R}^3$ and $\xi \in \Xi$ (the function $\alpha$ was defined in \eqref{kernel}). The main result of this paper is stated below.

\noindent 
\begin {thm} \label{thm-existence-vel}  Suppose that $\s$ is in $C_b^{\infty}(\R)$. Suppose hypothesis {\bf A} hold.  Let $X_0$ and $Z_0$ be $\mathbb{R}^3$- valued random variables with finite second moments. For any fixed $T>0$, there exists a stochastic basis $(\Omega,\mathscr{F},(\mathscr{F}_t)_{t\in [0,T]},\mathrm{P})$,   an adapted process $(X_t,Z_t)_{t\in [0,T]}$ with values on  $\mathbb{D}\times \mathbb{D}$,  and a compensated random measure (crm) $\tilde {N}^\mu$, with  $\mu$ being the law of the stochastic process $(X, Z)$, satisfying a.s. the following stochastic equation for $t \in [0,T]$:
\begin{equation}
 \begin {split}
   Z_t &=  Z_0 + \int_0^t \int_{\mathbb{D}\times \mathbb{D}\times \Xi \times [0,1]} \alpha(Z_s, v_s, \xi)1_{[0,\; \s(|Z_{s} - v_s|)\beta(|X_s - y_s|)]}(r) 
d\tilde {N}^\mu\\& + \int_0^t \int_{\mathbb{D}\times \mathbb{D} \times \Xi} \hat{\alpha}(Z_s, v_s, \xi)\beta(|X_s - y_s|) d\mu Q(d\xi)  ds \label{eq-vel}
 \end {split}
 \end{equation}
 and 
 \begin{equation}
 \label{eq-space}
  X_t = X_0 + \int_0^t Z_sds, 
 \end{equation}
where  $d\tilde {N}^\mu:=$$\tilde {N}^\mu( dy,dv,d\xi,dr,ds)$. For any $t\in [0,T]$,  $X_t$ and $Z_t$  have finite second moments.
\end {thm}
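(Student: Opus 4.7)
The plan is to combine a cutoff of the angular measure $Q$ near $\theta=0$ with a Picard-type fixed point in the law $\mu$ itself, and then pass to the limit in the cutoff. For each $M\ge 1$, set $Q^M(d\theta):=\mathbf{1}_{[1/M,\pi]}(\theta)\,Q(d\theta)$; by {\bf A1} this is a finite measure, and the estimates \eqref{B1}--\eqref{growth} hold with $Q$ replaced by $Q^M$ with the same constant $C$, since they only use $\int_0^{\pi}\theta\,Q(d\theta)<\infty$. I work first with $Q^M$ in place of $Q$ in \eqref{eq-vel}, producing a family of solutions $(X^M,Z^M)$.

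\textbf{Inner fixed point for fixed $M$.} Starting from an initial candidate law $\mu^{M,0}$ (e.g., the law of $(X_0+tZ_0,Z_0)$), I iterate: given the law $\mu^{M,k}$ of $(X^{M,k},Z^{M,k})$, construct on an extended stochastic basis a compensated Poisson random measure $\tilde N^{\mu^{M,k}}$ with compensator $\mu^{M,k}(dy,dv)\,Q^M(d\xi)\,dr\,ds$, and solve the \emph{linear} SDE obtained by plugging $\mu^{M,k}$ into \eqref{eq-vel}--\eqref{eq-space}. Existence and uniqueness of this linear step is a standard Lipschitz Picard fixed point in $L^2(\Omega;\mathbb D)$: the compensated stochastic integral is controlled via the $L^2$ isometry by \eqref{B1}, the drift by \eqref{B3}, and the Lipschitz continuity of $\sigma$ together with the boundedness of $\beta$ take care of the thinning indicator $\mathbf{1}_{[0,\sigma\beta]}(r)$. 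The compensator is finite thanks to the cutoff.

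\textbf{Contraction on the law and uniform bounds.} To close the outer iteration, I compare two consecutive iterates in a Wasserstein-type $L^2$ distance $W_{2,t}$ on path-measures on $\mathbb D\times\mathbb D$ up to time $t$. The key observation is that all iterates can be coupled through a \emph{common} reference Poisson random measure via the thinning representation encoded by $\mathbf 1_{[0,\sigma\beta]}(r)$; the $\mu^{M,k}$-dependence then enters only through the acceptance window and the drift, both Lipschitz in the law under {\bf A2}. BDG applied to the compensated integral, together with \eqref{B1}--\eqref{B3}, yields an inequality of the form
\begin{equation*}
\mathbb E\sup_{s\le t}\!\bigl(|Z^{M,k+1}_s-Z^{M,k}_s|^2+|X^{M,k+1}_s-X^{M,k}_s|^2\bigr)\le K\!\!\int_0^t\!\!\mathbb E\sup_{u\le s}(\cdots)\,du+K\,W_{2,s}(\mu^{M,k},\mu^{M,k-1})^2,
\end{equation*}
and Gronwall gives a geometric contraction in $k$ on short intervals, extended to $[0,T]$ by concatenation. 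The Picard limit $(X^M,Z^M)$ solves the $M$-truncated version of \eqref{eq-vel}--\eqref{eq-space}; a parallel Gronwall argument on $\mathbb E(|X^M_t|^2+|Z^M_t|^2)$ using \eqref{B2} and the identity $\mathbb E_{\mu^M_s}|v|^2=\mathbb E|Z^M_s|^2$ (which absorbs the McKean--Vlasov self-coupling) yields second-moment bounds uniform in $M$.

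\textbf{Passing $M\to\infty$ and the main obstacle.} The uniform moment bounds, combined with \eqref{BB2} controlling small-$\theta$ jumps via $\int\theta^2\,Q(d\theta)$-type quantities (which are finite by {\bf A1}), give tightness of $\{\mathrm{Law}(X^M,Z^M)\}$ on $\mathbb D\times\mathbb D$ through Aldous's criterion: small jumps are controlled in $L^2$ by \eqref{B1}, large jumps by the finiteness of $Q$ away from $0$, and the position component inherits tightness from $\int_0^t Z^M_s\,ds$. Extract a weakly convergent subsequence with limit $\mu$, and identify $\mu$ as a weak solution of \eqref{eq-vel}--\eqref{eq-space} via the martingale problem for $\psi\in C^2_0(\mathbb R^6)$, in the spirit of the computation that will underlie Proposition \ref{PropEns}; a version of $\tilde N^\mu$ supporting the solution is then constructed on the limit basis by the standard martingale representation for point processes with deterministic compensator. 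The main obstacle is the \emph{double} McKean--Vlasov dependence — the intensity of the driving Poisson measure depends on the unknown law in addition to the drift — and it is precisely the thinning representation via $\mathbf 1_{[0,\sigma\beta]}(r)$ that reduces this to a Lipschitz-on-law problem; hypothesis {\bf A2} on $\sigma$ is exactly the regularity needed for the acceptance window to be Lipschitz in $\mu$ after $r$-integration.
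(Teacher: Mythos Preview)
Your architecture (cutoff of $Q$, fixed point in the law, then $M\to\infty$) is reasonable, but there is a genuine gap in the outer contraction step. You claim that ``all iterates can be coupled through a common reference Poisson random measure via the thinning representation encoded by $\mathbf{1}_{[0,\sigma\beta]}(r)$'' and that ``the $\mu^{M,k}$-dependence then enters only through the acceptance window and the drift''. This misidentifies what the thinning does. The indicator $\mathbf{1}_{[0,\sigma(|Z_s-v_s|)\beta(|X_s-y_s|)]}(r)$ depends on the \emph{state} $(Z_s,X_s,v_s,y_s)$; it removes the state-dependent factor $\sigma\beta$ from the compensator, nothing more. The law $\mu^{M,k}$ still sits in the intensity $\mu^{M,k}(dy,dv)\,Q^M(d\xi)\,dr\,ds$ of $N^{\mu^{M,k}}$ itself. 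Two Poisson random measures with base measures $\mu^{M,k}$ and $\mu^{M,k-1}$ on $\mathbb{D}^2$ cannot be obtained by thinning a single PRM unless one introduces extra structure---either a dominating measure with controlled Radon--Nikodym derivatives, or a Tanaka--type auxiliary $[0,1]$-randomization together with measurable maps $\Phi_k:[0,1]\to\mathbb{D}^2$ pushing Lebesgue to $\mu^{M,k}$ and chosen in a Wasserstein-optimal way. None of that is in your sketch, and without it the displayed contraction inequality has no basis; the right-hand side term $W_{2,s}(\mu^{M,k},\mu^{M,k-1})^2$ does not arise from any estimate you have actually set up.

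The paper's proof avoids this coupling problem entirely and follows a different route. First (Section~3), for a \emph{fixed} law $\lambda$ it solves the frozen equation \eqref{eq-velr}--\eqref{eq-spacer} by localization $\alpha_j(z,v,\xi)=\alpha(z,v,\xi)/(1+d(z,B_j))$ and a contraction in the $S_T^1$-norm $\mathbb{E}[\sup_{s\le T}|\cdot|]$; no cutoff of $Q$ is used. Second (Section~4), it iterates $\mu^{(n)}\mapsto(Z^{(n+1)},X^{(n+1)})$ and obtains tightness from uniform second-moment bounds and Kurtz's criterion. Third (Section~5), rather than comparing paths of consecutive iterates, it compares their one-time marginals against $C_b^\infty$ test functions: setting $J_{n+1}(t)=\sup_{\phi\in C_b^\infty}|\int\phi\,(d\mu_t^{(n+1)}-d\mu_t^{(n)})|$, the It\^o formula yields $J_{n+1}(t)\le K\epsilon+C_1\int_0^tJ_{n+1}(s)\,ds+\int_0^tJ_n(s)\,ds$, and iterated Gronwall gives $J_{n+1}(t)\to 0$. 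This uses only weak (test-function) estimates and never couples the PRMs across $n$. Finally (Section~6), a convergent subsequence is identified via the martingale problem, and the result of Section~5 forces the limits of $\{\mu^{(n_k)}\}$ and $\{\mu^{(n_k+1)}\}$ to coincide, closing the self-consistency $\mu=\mathcal{L}(Z,X)$. If you want to salvage your contraction approach, you must supply the missing coupling of the $\mu$-dependent PRMs explicitly; otherwise the paper's weak/martingale-problem route is the one that actually goes through.
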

 \begin {proposition} \label{PropEns} Let $\mu$ denote the law of the process $\{X_s,Z_s: 0\le s \le T\}$, solving \eqref{eq-vel}, \eqref{eq-space}. Then $\mu$  solves 
the Enskog equation (\ref{MWBEA}) for any $\psi \in C^2_0(\R^6)$.
\end {proposition}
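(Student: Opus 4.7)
The plan is to apply It\^o's formula to $\psi(X_t,Z_t)$, take expectation so that the martingale piece disappears, and identify the resulting identity with the integrated form of \eqref{MWBEA}. A useful preliminary observation is that the drift in \eqref{eq-vel} is exactly the compensator of the $\tilde N^\mu$-integral preceding it: integrating $1_{[0,\sigma\beta]}(r)$ over $r\in[0,1]$ produces the factor $\sigma\beta$, so that $\int \alpha\,1_{[0,\sigma\beta]}\,d\mu\,Q(d\xi)\,d\phi\,dr = \int \hat\alpha\,\beta\,d\mu\,Q(d\xi)\,d\phi$. Consequently \eqref{eq-vel} is equivalent to the pure-jump equation
\[
Z_t = Z_0 + \int_0^t\!\!\int \alpha(Z_{s-},v_s,\xi)\,1_{[0,\sigma(|Z_{s-}-v_s|)\beta(|X_s-y_s|)]}(r)\,dN^\mu,
\]
while $X$ is absolutely continuous with $\dot X_s = Z_s$.

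\textbf{It\^o computation.} Since $X$ is continuous of finite variation and $Z$ is pure-jump, It\^o's formula for $\psi\in C^2_0(\R^6)$ yields
\begin{align*}
\psi(X_t,Z_t) - \psi(X_0,Z_0) &= \int_0^t \nabla_x\psi(X_s,Z_s)\cdot Z_s\,ds \\
&\quad + \int_0^t\!\!\int \bigl\{\psi(X_s,Z_{s-}+\alpha\,1_{[0,\sigma\beta]}) - \psi(X_s,Z_{s-})\bigr\}\,dN^\mu.
\end{align*}
Splitting $dN^\mu = d\tilde N^\mu + \mu(dy,dv)\,Q(d\xi)\,d\phi\,dr\,ds$, the $\tilde N^\mu$-piece $M_t$ is a zero-mean martingale: the Lipschitz bound on $\psi$, estimate \eqref{B2}, boundedness of $\sigma,\beta$, and the finite second moment of $Z_s$ from Theorem~\ref{thm-existence-vel} together give $EM_t^2<\infty$. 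In the compensator piece, integrating out $r\in[0,1]$ converts $1_{[0,\sigma\beta]}(r)$ into the factor $\sigma\beta$; identifying $Z_s+\alpha(Z_s,v_s,\xi)$ with the post-collision velocity $Z_s^\star$ (as in \eqref{eqn1.4}), this contribution equals $\int_0^t L^{\beta}_{\mu}\psi(X_s,Z_s)\,ds$.

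\textbf{Conclusion and main obstacle.} Taking expectation and writing $E[F(X_s,Z_s)] = \langle \mu_s, F\rangle$ gives
\[
\langle \mu_t,\psi\rangle - \langle \mu_0,\psi\rangle = \int_0^t \langle \mu_s,\,u\cdot\nabla_x\psi\rangle\,ds + \int_0^t \langle \mu_s,\,L^{\beta}_{\mu}\psi\rangle\,ds,
\]
and differentiating in $t$ produces \eqref{MWBEA}. The only genuine technical point is to justify the $L^2$-martingale property of $M_t$ (so that $EM_t=0$) and the Fubini exchange of expectation with the compensator integral; both reduce, via the Lipschitz bound on $\psi$ and estimates \eqref{B1}--\eqref{B2}, to the uniform-in-$s\in[0,T]$ second-moment estimate $\sup_s E|Z_s|^2<\infty$ supplied by Theorem~\ref{thm-existence-vel}, together with the boundedness of $\sigma$ and $\beta$.
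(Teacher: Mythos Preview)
Your proof is correct and follows essentially the same route as the paper: apply the It\^o formula to $\psi(X_t,Z_t)$, take expectation so the $\tilde N^\mu$--martingale drops out, integrate the indicator $1_{[0,\sigma\beta]}(r)$ over $r$ to recover the factor $\sigma\beta$, and then pass from the integrated identity to the differential form \eqref{MWBEA}. The only cosmetic difference is that you first collapse \eqref{eq-vel} to the equivalent uncompensated form $Z_t=Z_0+\int\alpha\,1_{[0,\sigma\beta]}\,dN^\mu$ before invoking It\^o, whereas the paper applies It\^o directly to \eqref{eq-vel} and then cancels the $\nabla_z\psi\cdot\alpha$ terms; the computations are identical in substance.
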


 \begin {proof}
   As  for any $t\in [0,T]$ $X_t$ and $Z_t$  have finite second moments, and due to the conditions ({\bf A1}) and \eqref{B2} we can  apply the It\^o formula to $(X_s,Z_s)_{s \in \mathbb{R}_+}$. 
In fact let $t$, $\Delta t > 0$, then 
\begin{align}
& \psi(X_{t+ \Delta t},Z_{t+ \Delta t})\nonumber \\
&= \psi(X_t,Z_t) +  \int_t^{t+ \Delta t} (Z_s,  \nabla_x \psi (X_s, Z_s)) ds \nonumber\\
&+ \int_t^{\Delta t} \int_{U_0\times [0,1]} \{\psi (X_s, Z_s + \alpha(Z_s,v_s,\xi)1_{[0,\; \s(|Z_{s} - v_s|)\beta(|X_s - y_s|)]}(r)) -\psi (X_s, Z_s)  \nonumber \\ & -\nabla_z\psi (X_s,Z_s) \alpha(Z_s, v_s, \theta, \phi)1_{[0,\; \s(|Z_{s} - v_s|)\beta(|X_s - y_s|)]}(r)\} \mu(dy,dv)  Q(d\theta) d\phi ds dr\, +\nonumber\\
&\int_t^{\Delta t} \int_{U_0}    \alpha(Z_s, v_s, \theta, \phi), \nabla_z\psi (X_s,Z_s))\}\s(|Z_s - v_s|)\beta(|X_s - y_s|) 
\mu(dy, dv) Q(d\theta) d\phi ds
\nonumber\\
& + M^{t+ \Delta t}_t(\psi) )\nonumber \\
&= \psi(X_t,Z_t) +  \int_t^{t+ \Delta t} (Z_s,  \nabla_x \psi (X_s, Z_s)) ds \nonumber\\
&+ \int_t^{\Delta t} \int_{U_0} \{\psi (X_s, Z_s + \alpha(Z_s,v_s,\xi)) -\psi (X_s, Z_s)  \nonumber \\ & -\nabla_z\psi (X_s,Z_s) \alpha(Z_s, v_s, \theta, \phi)\} \s(|Z_{s} - v_s|)\beta(|X_s - y_s|) \mu(dy,dv) Q(d\theta) d\phi ds dr \,+\nonumber\\
&\int_t^{\Delta t} \int_{U_0}    \alpha(Z_s, v_s, \theta, \phi), \nabla_z\psi (X_s,Z_s))\}\s(|Z_s - v_s|)\beta(|X_s - y_s|) 
\mu(dy, dv) Q(d\theta) d\phi ds
\nonumber\\
& + M^{t+ \Delta t}_t(\psi)\label{eqIto}
\end{align}
where $\{M^{t+ \Delta t}_t(\psi)\}_{\Delta t \in (0,T]}\}$ is a martingale, for each $T\in \mathbb{R}$.

Taking the expectation $\mathbb{E}$ with respect to the measure $\mu(dx,dv)$ in \eqref{eqIto}, we get 
\begin{align*}
 &\mathbb{E}[\psi(X_{t+ \Delta t},Z_{t+ \Delta t})- \psi(X_t,Z_t)] \\&= \mathbb{E}[ \int_t^{t+ \Delta t} (Z_s, \nabla_x\psi(X_s,Z_s)) ds]\\ & + 
\mathbb{E}[\int_0^t \int_{U_0} 
\s(|Z_s - v_s|)\beta(|X_s - y_s|) (\alpha(Z_s, v_s, \theta, \phi), \nabla_z \psi(X_s,Z_s))
\mu(dy, dv) Q(d\theta) d\phi ds ]\\
&+ \mathbb{E}[\int_{0}^{t}\int_{U_0} \{\psi(X_s, Z_s+ \alpha(Z_s, v_s, \theta, \phi)) - \psi(X_s,Z_s) \notag\\& 
-  (\nabla_z\psi(X_s,Z_s),   \alpha(Z_s, v_s, \theta, \phi))\}\s(|Z_s - v_s|) \beta(|X_s - y_s|)\mu(dy, dv) Q(d\theta) d\phi ds] 
\end{align*}

Dividing by $\Delta t$ and letting $\Delta t \to 0$ we obtain \eqref{MWBEA} by noting that $\mu$ is also the law of $(Z, X)$.
\end {proof}
 In Section 7 in Theorem \ref{ThmUniq} we will prove uniqueness of the  law $\mu$  of the process $\{X_s,Z_s: 0\le s \le T\}$, solving the McKean -Vlasov equation  \eqref{eq-vel}, \eqref{eq-space} in the following sense: 
we prove that for any fixed $t$ in the interval $[0, T]$, the $t$-marginal distribution of weak solutions of \eqref{eq-vel}, \eqref{eq-space} is unique within the class of 
Borel probability measures on $\mathbb{R}^6$ that are absolutely continuous with respect to the Lebesgue measure on $\R^6$. 

\par  The existence of a probability density for the time-marginals of  the velocity is verified  in the case where the initial condition is  Gaussian, and is shown to be the density of an invariant measure in Section 8, Theorem \ref{invsol}.

\section{Existence and uniqueness of  a stochastic equation}
Consider a given  filtered probability space $(\Omega,\mathscr{F},(\mathscr{F}_t)_{t\in [0,T]},\mathrm{P})$ satisfying the usual conditions.   Let $S_T : = S_T^1(\mathbb{R}^d)$ denote the linear  space of all adapted c\`adl\`ag processes $(X_t)_{t\in [0,T]}$ with values on $\mathbb{R}^d$ equipped with norm 
\begin{equation} \label{norm1}
\|X\|_{S_T^1}:= \mathbb{E}[\sup_{s\in[0,T]}|X_s|].
\end{equation}
$S_T^1(\R^d)$ is a Banach space. This can be shown  similar to the proof of Lemma 4.2.1, page 93 in \cite {MR}.
We consider $S_T(\mathbb{R}^d)$ for $d = 6$.  

\par
Let $\mathbb{D}$ be equipped with the Skorohod topology, where $\mathbb{D}$ denotes $\mathbb{D}([0, T]; \R^3)$. 
Given a probability measure $\l$ on $\mathbb{D} \times \mathbb{D}$, let $\l_t$ denote its marginal at time $t$.  Let us assume 

\noindent 
\begin{equation}\int_0^T \int_{\mathbb{D}\times \mathbb{D}}  (|v_t|+|y_t|)\l(dv,dy)dt\,  <\infty \quad \forall \, T>0. \label{L1 condition}
\end{equation} 
Consider a Poisson random measure  $N^{\l}$ on $(\Omega,\mathscr{F},\mathscr{F}_t,\mathrm{P})$ with 
intensity measure $\l_t(dy, dv)Q(d\xi)drdt$ on the Borel subsets of $\mathbb{D} \times \mathbb{D} \times (0, \pi]\times [0, 2\pi)\times [0,1]\times [0, T]$.
We denote by $\tilde{N}^{\l}$ the corresponding compensated Poisson random measure. From the condition (\ref{L1 condition}) and (\ref{growth}), and the hypothesis  $\|\sigma\|_\infty$$= \|\beta\|_\infty=1$, it follows that 
\begin{equation}
\int_0^T\int_{\mathbb{D} \times \mathbb{D} \times \Xi}|\alpha(z,v_t,\xi)|\sigma(|z-v_t|)\beta(|x-y_t|)\l(dv,dy)dt Q(d\xi)\, <\infty \,, 
\end{equation}
$\forall z \in \mathbb{R}^3, x\in \mathbb{R}^3, \forall\, T>0$.

Let us use the following notation: 
\[
U_0 = \mathbb{D}^2 \times \Xi\]
\[
U = \mathbb{D}^2 \times \Xi \times [0, 1] \]
where $\mathbb{D}^2:=\mathbb{D}\times \mathbb{D}$.

\begin{thm}\label{Theorem MR} 
Let  $(Z_0,X_0)$ be a random vector with values on $ \mathbb{R}^3$$\times \mathbb{R}^3$ with 
 \begin{equation}\mathbb{E}[|Z_0|]<\infty,\quad  
\mathbb{E}[|X_0|]<\infty \,,\label{first moment initial}\end{equation} and assume \eqref{L1 condition}. 
 Then for all $T>0$ there  exists a unique strong solution  of the stochastic  equation 
  \begin {align}
    Z_t^{\l} &= Z_0\notag\\&+ \int_0^t \int_{U}\alpha(Z_{s}^{\l}, v_s, \xi)1_{[0,\; \s(|Z_{s}^{\l} - v_s|)\beta(|X_s^{\l} - y_s|)]}(r)
 d\tilde {N}^{\l}\notag\\& + \int_{U_0} \hat{\alpha}(Z_s^{\l}, v_s, \xi)\beta(|X_s^{\l} - y_s|) \l(dy dv) Q(d\xi)ds \label {eq-velr}
\\X_t^{\l}& = X_0+\int_0^t Z_s^{\l}ds,  \label{eq-spacer}
    \end{align}
    on $S_T^1$, where  $d\tilde {N}^{\l}$ denotes $\tilde {N}^{\l}(dy,dv,d\xi, dr, ds)$.
\end{thm}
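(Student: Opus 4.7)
Since $\lambda$ is a fixed probability measure (not the law of the unknown process), the system \eqref{eq-velr}--\eqref{eq-spacer} is no longer of McKean--Vlasov type but is a standard SDE driven by the compensated Poisson random measure $\tilde N^\lambda$ with deterministic intensity. I would prove the theorem by a Picard iteration in the Banach space $S_T^1(\mathbb{R}^6)$, following the classical scheme for SDEs with jumps and globally Lipschitz coefficients.

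\textbf{Step 1 (Picard iterates).} Set $(Z^{(0)}, X^{(0)}) \equiv (Z_0, X_0)$ (viewed as constant processes) and define $(Z^{(n+1)}, X^{(n+1)})$ by plugging $(Z^{(n)}, X^{(n)})$ into the right--hand sides of \eqref{eq-velr} and \eqref{eq-spacer}. Each iterate lies in $S_T^1$: the drift has linear growth in $Z^{(n)}$ by \eqref{growth-L} and \eqref{L1 condition}, while the compensated jump integral is controlled in $L^1$ via a Lenglart--type domination
\[
\mathbb{E}\sup_{s\le t}\Bigl|\int_0^s\!\!\int_U f\, d\tilde N^\lambda\Bigr| \;\le\; 2\,\mathbb{E}\int_0^t\!\!\int_U |f|\, d\nu,
\]
which is finite because the integrand in \eqref{eq-velr} is pathwise dominated by $|\alpha(Z_s^{(n)}, v_s, \xi)|$, to which \eqref{growth-L} and \eqref{L1 condition} again apply. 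Gronwall's lemma applied to $t \mapsto \mathbb{E}\sup_{s\le t}|Z_s^{(n+1)}|$ yields the a priori bound $\sup_n \|Z^{(n)}\|_{S_T^1} < \infty$, and the analogous bound for $X^{(n)}$ is immediate from \eqref{eq-spacer}.

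\textbf{Step 2 (Contraction estimate).} Writing $\Delta Z_s^{(n)} := Z_s^{(n+1)} - Z_s^{(n)}$, $\Delta X_s^{(n)} := X_s^{(n+1)} - X_s^{(n)}$, one estimates the differences using the decomposition
\[
\alpha_1\mathbf{1}_{[0,a_1]}(r) - \alpha_2\mathbf{1}_{[0,a_2]}(r) \;=\; (\alpha_1 - \alpha_2)\mathbf{1}_{[0,a_1]}(r) \;+\; \alpha_2\bigl(\mathbf{1}_{[0,a_1]}(r) - \mathbf{1}_{[0,a_2]}(r)\bigr),
\]
with $\alpha_i = \alpha(Z_s^{(n-1+i)}, v_s, \xi)$ and $a_i = \sigma(|Z_s^{(n-1+i)} - v_s|)\beta(|X_s^{(n-1+i)} - y_s|)$. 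Integration in $r$ gives $\int_0^1|\mathbf{1}_{[0,a_1]} - \mathbf{1}_{[0,a_2]}|dr = |a_1-a_2|$, which is Lipschitz in $(Z, X)$ because $\sigma$ is Lipschitz by hypothesis \textbf{A2} and $\beta \in C_0^\infty$. Combining this with \eqref{B3} for the first piece and \eqref{growth-L} for $|\alpha_2|$ in the second piece, together with Fubini and the $L^1$ martingale bound from Step 1, produces a Gronwall-type inequality
\[
\mathbb{E}\sup_{s\le t}|\Delta Z_s^{(n)}| + \mathbb{E}\sup_{s\le t}|\Delta X_s^{(n)}| \;\le\; K\int_0^t\Bigl(\mathbb{E}\sup_{r\le s}|\Delta Z_r^{(n-1)}| + \mathbb{E}\sup_{r\le s}|\Delta X_r^{(n-1)}|\Bigr)ds.
\]
Iterating yields geometric decay $\|\Delta Z^{(n)}\|_{S_T^1} + \|\Delta X^{(n)}\|_{S_T^1} \le \frac{(KT)^n}{n!}\cdot C_0$, so the iterates form a Cauchy sequence in $S_T^1 \times S_T^1$. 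The limit $(Z^\lambda, X^\lambda)$ is the desired strong solution, and uniqueness follows from the same Lipschitz estimate applied to two solutions.

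\textbf{Main obstacle.} The delicate point is extracting a \emph{global} Lipschitz constant $K$ independent of $n$ in Step 2. The term $\alpha_2(\mathbf{1}_{[0,a_1]} - \mathbf{1}_{[0,a_2]})$ produces, after the $Q$-integration, a factor $C|Z_s^{(n)} - v_s|$ multiplying $|a_1 - a_2|$, generating a cross--term involving $|Z_s^{(n)}|$ that is not absorbed by a uniform constant since $\sigma$ is only Lipschitz, not constant. One must either invoke the a priori moment bound from Step 1 to dominate $\mathbb{E}[|Z_s^{(n)}|\,|\Delta Z_s^{(n-1)}|]$ (using, e.g., $\beta$'s compact support to keep the $\lambda$--integral tight in $|y-X|$), or first establish the theorem on $[0, \tau_N \wedge T]$ for the stopping time $\tau_N := \inf\{t : |Z_t^{(n)}| \ge N\}$ --- where the Lipschitz constant becomes bounded --- and then pass to $N \to \infty$ using the uniform-in-$n$ first-moment estimate to guarantee $\tau_N \uparrow T$ almost surely.
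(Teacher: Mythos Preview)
Your diagnosis of the obstacle is exactly right, and the paper resolves it along your second suggested route. Step~2 as written cannot close: after the $r$- and $Q$-integrations the cross-term $|\alpha(Z_s^{(n)},v_s,\xi)|\,|a_1-a_2|$ contributes a factor $C|Z_s^{(n)}-v_s|\cdot(|\Delta Z_s^{(n-1)}|+|\Delta X_s^{(n-1)}|)$, and with only $L^1$ control on $Z^{(n)}$ (no $L^2$ or $L^\infty$ here) your first fix---bounding $\mathbb{E}[|Z_s^{(n)}|\,|\Delta Z_s^{(n-1)}|]$ by a constant times $\mathbb{E}|\Delta Z_s^{(n-1)}|$---does not go through. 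The paper records this explicitly: Lemma~\ref{locLip} gives only a \emph{local} Lipschitz constant $L_n=Kn$ on the event $\{\sup_s|Z_s|\le n,\ \sup_s|Z_s'|\le n\}$.

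The paper's localization differs slightly from your stopping-time proposal on the Picard iterates. Instead of stopping the $Z^{(n)}$, it truncates the coefficient once and for all, replacing $\alpha(z,v,\xi)$ by $\alpha_j(z,v,\xi):=\alpha(z,v,\xi)/(1+d(z,B_j))$ with $B_j$ the $j$-ball. This renders the integrand globally Lipschitz (with constant depending on $j$), so the contraction mapping in $S_T^1$ produces a unique solution $(Z^{\lambda,j},X^{\lambda,j})$ of the truncated equation (Theorem~\ref{existence-uniqueness Lipschitz}). One then sets $\tau_j:=\inf\{t:|Z_t^{\lambda,j}|>j\}$, uses uniqueness to get consistency $Z^{\lambda,j}=Z^{\lambda,j+1}$ on $[0,\tau_j]$, and shows $\tau_j\uparrow T$ a.s.\ from the $j$-uniform Gronwall bound $\mathbb{E}\sup_{t\le T}|Z_t^{\lambda,j}|\le e^{KT}(M_T+\mathbb{E}|Z_0|)$ and Markov's inequality. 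The advantage over stopping the iterates is that the stopping time is attached to a single well-defined process $Z^{\lambda,j}$ rather than to an $n$-dependent sequence, which makes the consistency and limit arguments cleaner.
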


We first introduce some  notation and preliminary results. 

\par Let $T>0$ and $(Z,X)_{t\in [0,T]}$ be an adapted process with values in $\mathbb{D}^2$. 
\begin{lemma}\label{stochIntdefined}
Assume $(Z,X)_{t\in [0,T]}\in S_T^1$. Then the stochastic integrals $(I(Z))_{t\in [0,T]}$ and $(\hat {I}(Z))_{t\in [0,T]}$, with 
\begin{equation}
I(Z)_{t}:= 
\int_0^t  \int_{U}\alpha(Z_{s_-}, v_s, \xi)1_{[0,\; \s(|Z_{s_-} - v_s|)\beta(|X_{s} - y_s|)]}(r)
 d{N}^{\l}, \label{stochint}
\end{equation}
and 
\begin{equation}\label{stochintmod}
\hat{I}(Z)_{t}:= \int_0^t  \int_{U} |\alpha(Z_{s_-}, v_s, \xi)|1_{[0,\; \s(|Z_{s_-} - v_s|)\beta(|X_{s} - y_s|)]}(r)
 d{N}^{\l}, 
\end{equation}
are well defined, and there exist constants $K>0$ and $M_T>0$ satisfying
\begin{equation}\label{ineqstochintmod}
\mathbb{E}[\hat{I}(Z))_{T}] \leq K \int_0^T \mathbb{E} [\sup_{s\in [0,t]} |Z_s|] dt +M_T. 
\end{equation}
\end{lemma}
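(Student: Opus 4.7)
The plan is to establish the bound \eqref{ineqstochintmod} directly for $\hat I(Z)_T$; well-definedness of $I(Z)_t$ will follow since $|I(Z)_t|\le \hat I(Z)_t$ pathwise, so finite expectation of the dominator makes the Poisson integral an absolutely convergent (pathwise) sum. Throughout I will use the intensity of $N^\lambda$, namely $\lambda(dy,dv)Q(d\xi)\,dr\,ds$, together with the normalization $\|\sigma\|_\infty=\|\beta\|_\infty=1$.

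First I would apply the (Campbell-type) formula for integrals against a Poisson random measure with positive integrand to the right-hand side of \eqref{stochintmod}: since the integrand is non-negative, Fubini/Tonelli allows one to replace $N^\lambda$ by its intensity even without any prior integrability hypothesis. Performing the inner $dr$-integration first uses the crucial identity
\[
\int_{[0,1]} 1_{[0,\;\sigma(|Z_{s-}-v_s|)\beta(|X_s-y_s|)]}(r)\,dr=\sigma(|Z_{s-}-v_s|)\beta(|X_s-y_s|),
\]
and this is where the $[0,1]$-auxiliary variable does its work. Thus
\[
\mathbb{E}[\hat I(Z)_T]=\mathbb{E}\!\int_0^T\!\!\int_{U_0}|\alpha(Z_{s-},v_s,\xi)|\,\sigma(|Z_{s-}-v_s|)\beta(|X_s-y_s|)\,\lambda(dy,dv)Q(d\xi)\,ds.
\]

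Second, I would discard $\sigma$ and $\beta$ using $\|\sigma\|_\infty=\|\beta\|_\infty=1$, perform the $\xi$-integration via estimate \eqref{growth} to pick up $C(|Z_{s-}|+|v_s|)$, and finally split the $\lambda$-integral. Because $Z_{s-}=Z_s$ for a.e.\ $s$, replacing $Z_{s-}$ by $Z_s$ does not alter the time integral. This yields
\[
\mathbb{E}[\hat I(Z)_T]\le C\int_0^T\!\mathbb{E}[|Z_s|]\,ds\;+\;C\int_0^T\!\!\int_{\mathbb{D}\times\mathbb{D}}|v_s|\,\lambda(dy,dv)\,ds.
\]
The first summand is bounded by $C\int_0^T\mathbb{E}[\sup_{u\in[0,s]}|Z_u|]\,ds$, which is the desired $K\int_0^T\mathbb{E}[\sup_{u\in[0,t]}|Z_u|]\,dt$ with $K=C$; the second summand is a deterministic finite constant by the standing assumption \eqref{L1 condition}, and can be taken to be $M_T$.

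The only genuine subtlety to keep in mind is the measurability/predictability of the integrand $1_{[0,\sigma(|Z_{s-}-v_s|)\beta(|X_s-y_s|)]}(r)$ in $(s,\omega,y,v,\xi,r)$; this is fine because $\sigma,\beta$ are continuous and $(Z_{s-},X_s)$ is predictable, so the integrand is $\mathcal{P}\otimes\mathcal{B}(U)$-measurable. Once the bound on $\hat I(Z)_T$ is established, well-definedness of $I(Z)_t$ in \eqref{stochint} follows from $|I(Z)_t|\le\hat I(Z)_t$ and dominated convergence for the jump-sum representation, so no separate argument is needed. I do not expect a serious obstacle here; the main care is just in organising the inner integrations in the right order and invoking \eqref{growth} (rather than \eqref{growth-L}, which would require having access to $|v|$ in a different way).
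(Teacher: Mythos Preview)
Your proposal is correct and follows essentially the same approach as the paper: compute $\mathbb{E}[\hat I(Z)_T]$ by replacing $N^\lambda$ with its compensator, integrate out the auxiliary variable $r$, bound $\sigma$ and $\beta$ by $1$, apply the growth estimate \eqref{growth}, and split into the $|Z_s|$ term and the $|v_s|$ term controlled by \eqref{L1 condition}. The paper is slightly terser and defers the well-definedness conclusion to an external reference, whereas you spell out the $dr$-integration and the measurability check explicitly, but the argument is the same.
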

\begin{proof}
We need to prove only inequality \eqref{ineqstochintmod}. It then follows that the stochastic integrals $(\hat {I}(Z)))_{t\in [0,T]}$ and $(I(Z)))_{t\in [0,T]}$ are well defined (see Section 3.5, in particular Lemma 3.5.20 of \cite{MR}, or Theorem 4.12 \cite{Ru}). 

\noindent 
Using \eqref{growth} it follows 
\begin{align*}
&\mathbb{E}[\hat{I}(Z)_{T}] =  \int_0^t \int_{U_0 } \mathbb{E}[  |\alpha(Z_{s}, v_s, \xi)| \s(|Z_{s_{-}} - v_s|)\beta(|X^{\l}_{s} - y_s|)]\l(dy dv) Q(d\xi)ds \\& \leq C 
  \left( \int_0^T \mathbb{E} [ |Z_s|] ds + \int_0^T \int_{\mathbb{D}\times \mathbb{D}} |v_s|\l(dv,dy)ds\right) 
  \\& \leq C 
    \left( \int_0^T \mathbb{E} [\sup_{s\in [0,t]} |Z_s|] dt + \int_0^T \int_{\mathbb{D}\times \mathbb{D}} |v_s|\l(dv,dy)ds\right) \,<\infty 
\end{align*}
\end{proof}

Let $ (SZ,SX)_{t\in [0,T]}$ denote the process defined through 
\begin{equation*}
    SZ_t := Z_0 +I(Z)_t \;\;\;\text{and}\;\;\; 
    SX_t :=X_0+\int_0^t Z_s ds.
    \end{equation*}
    If the random vector $(Z_0,X_0)$ satisfies \eqref{first moment initial} and $(Z,X)\in S_T^1$ then $(SZ,SX) \in S_T^1$. 
    
    Indeed 
    \begin{equation}\label{supS}
    \sup_{t\in [0,T]} |SZ_t| \leq \hat{I}(Z)_T +|Z_0| \quad a.s.
    \end{equation}
    and hence 
     \begin{equation}\label{expsupS}
        \mathbb{E}[ \sup_{t\in [0,T]} |SZ_t| ]\leq   \mathbb{E}[\hat{I}(Z)_T] +\mathbb{E}[|Z_0|] 
         \end{equation}
    The statement follows from the estimate  \eqref{ineqstochintmod}.
   
   \begin{lemma}\label{locLip}
    For any $T>0$ fixed, there exists a constant $K>0$, such that for all $n\in \mathbb{N}$ and all $(Z,X)\in S_T^1$ satisfying  $\sup_{s\in [0,T]} |Z_s|\leq n$, $\sup_{s\in [0,T]} |Z_s'|\leq n$, the following inequality holds 
    \begin{align*}
    \int_0^t  \int_{U} & |\alpha(Z_s, v_s, \xi)1_{[0,\; \s(|Z_s - v_s|)\beta([|X_s - y_s|]}(r)- \alpha(Z_s', v_s, \xi)1_{[0,\; \s(|Z_s' - v_s|)\beta(|X_s' - y_s|)]}(r)| \\& \l(dv,dy) Q(d\xi) dr ds 
   \\&\leq  \int_0^t  L_n (|Z_s-Z_s'|+|X_s-X_s'|)ds \quad P\,-a.s., 
    \end{align*}
    with $L_n=Kn$.
    \end{lemma}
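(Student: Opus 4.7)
The plan is to obtain a pointwise-in-$s$ bound on the full $r,\xi,v,y$-integral, by decomposing the integrand into a piece controlled by the Lipschitz continuity of $\alpha$ in its first two arguments and a piece controlled by the Lipschitz continuity of $\sigma$ and $\beta$ appearing in the upper limits of the indicators. The $n$-dependence will then come from the truncation $|Z_s'|\le n$ combined with the linear-growth bound \eqref{growth} on $\alpha$.

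Write $a:=\sigma(|Z_s-v_s|)\beta(|X_s-y_s|)$ and $b:=\sigma(|Z_s'-v_s|)\beta(|X_s'-y_s|)$ and decompose
\[
\alpha(Z_s,v_s,\xi)1_{[0,a]}(r)-\alpha(Z_s',v_s,\xi)1_{[0,b]}(r)=\bigl[\alpha(Z_s,v_s,\xi)-\alpha(Z_s',v_s,\xi)\bigr]1_{[0,a]}(r)+\alpha(Z_s',v_s,\xi)\bigl[1_{[0,a]}(r)-1_{[0,b]}(r)\bigr].
\]
After taking absolute values, integration over $r\in[0,1]$ produces a factor of $a\le 1$ in the first summand and $|a-b|$ in the second; Lipschitz continuity of $\sigma$ (from $\sigma\in C_b^\infty$) and of $\beta$ (from $\beta\in C_0^\infty$), together with $\|\sigma\|_\infty=\|\beta\|_\infty=1$, yields $|a-b|\le L_\sigma|Z_s-Z_s'|+L_\beta|X_s-X_s'|$. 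Integration over $\xi$ uses \eqref{B3} with matching velocity arguments to give $\int_\Xi|\alpha(Z_s,v_s,\xi)-\alpha(Z_s',v_s,\xi)|\,Q(d\xi)\le C|Z_s-Z_s'|$ (independent of $v_s$), and \eqref{growth} with $|Z_s'|\le n$ to give $\int_\Xi|\alpha(Z_s',v_s,\xi)|\,Q(d\xi)\le C(n+|v_s|)$. Integrating in $\lambda(dv,dy)$ and adding the two contributions then bounds the $s$-integrand by
\[
C|Z_s-Z_s'|+C\bigl(L_\sigma|Z_s-Z_s'|+L_\beta|X_s-X_s'|\bigr)\left(n+\int_{\mathbb{D}^2}|v_s|\,\lambda(dv,dy)\right).
\]

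To reach the form $L_n(|Z_s-Z_s'|+|X_s-X_s'|)$ with $L_n=Kn$, set $M:=\sup_{s\in[0,T]}\int_{\mathbb{D}^2}|v_s|\,\lambda(dv,dy)$ and take $K:=C(1+L_\sigma+L_\beta)(1+M)$; integrating in $s$ from $0$ to $t$ yields the claim. The main obstacle is controlling the $|v_s|$-contribution: hypothesis \eqref{L1 condition} only gives the time-integrated first moment of $\lambda$, so one needs a supplementary uniform-in-$s$ first-moment bound (automatic if $\lambda$ is the law of a càdlàg process with finite moments on $[0,T]$, as is the case for the Picard iterates constructed in Theorem \ref{Theorem MR}; alternatively, one may regard $L_n$ as a locally integrable function of $s$, which still suffices for the subsequent Gronwall step).
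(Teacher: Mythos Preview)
Your proof is correct and follows essentially the same approach as the paper: the same add-and-subtract decomposition into a term controlled by \eqref{B3} and a term where integrating out $r$ yields $|a-b|$, followed by Lipschitz continuity of $\sigma,\beta$ and the growth bound \eqref{growth} together with $|Z_s'|\le n$. The paper's proof stops at the line $II\le K\int_0^t\int_{\mathbb{D}^2}(|Z_s'|+|v_s|)(|Z_s'-Z_s|+|X_s'-X_s|)\,\lambda(dv,dy)\,ds$ without spelling out how $\int|v_s|\,\lambda(dv,dy)$ is absorbed into the constant; you go further by making this explicit and correctly flagging that \eqref{L1 condition} alone gives only a time-integrated bound, so a uniform-in-$s$ first moment (available for the measures actually used downstream) is needed to get the clean form $L_n=Kn$.
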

    \begin{proof}
    \begin{align*}
        \int_0^t  \int_{U} & |\alpha(Z_s, v_s, \xi)1_{[0,\; \s(|Z_s - v_s|)\beta([|X_s - y_s|]}(r)- \alpha(Z_s', v_s, \xi)1_{[0,\; \s(|Z_s' - v_s|)\beta(|X_s' - y_s|)]}(r)| \\& \l(dv,dy) Q(d\xi) dr ds \leq I+II \quad P\,-a.s., \,\,\,\,\text{with}
        \end{align*}
          \begin{align*}
          I:= &\int_0^t  \int_{U}|\alpha(Z_s, v_s, \xi)-\alpha(Z_s', v_s, \xi)|1_{[0,\; \s(|Z_s - v_s|)\beta([|X_s - y_s|]}(r)\l(dv,dy) Q(d\xi) dr ds\\& \leq  \int_0^t C|Z_s-Z_s'|ds
          \end{align*}
          where the last  inequality follows from \eqref{B3}, and 
\begin{align*}
          II& := \int_0^t  \int_{U}  |\alpha(Z_s', v_s, \xi)|\times \\&
          \{1_{[0,\; \s(|Z_s - v_s|)\beta(|X_s - y_s|)]}(r)-1_{[0,\; \s(|Z_s' - v_s|)\beta(|X_s' - y_s|)]}(r)|\}\l(dv,dy) Q(d\xi) dr ds
          \\& \leq \int_0^t  \int_{U_0} |\alpha(Z_s', v_s, \xi)|| \max(\s(|Z_s - v_s|)\beta(|X_s - y_s|),\s(|Z_s' - v_s|)\beta(|X_s' - y_s|))\\& - \min(\s(|Z_s - v_s|)\beta(|X_s - y_s|),\s(|Z_s' - v_s|)\beta(|X_s' - y_s|)|) \l(dv,dy) Q(d\xi) ds =
          \\& \int_0^t  \int_{U_0} |\alpha(Z_s', v_s, \xi)|\, | \s(|Z_s - v_s|)\beta(|X_s - y_s|)- \s(|Z_s' - v_s|)\beta(|X_s' - y_s|)| \l(dv,dy) Q(d\xi) ds
          \end{align*}
          Using that $\sigma$ and $\beta$ are Lipschitz continuous functions bounded by $1$, as well as \eqref{growth}, we get that there exists a constant $K>0$, such that 
           \begin{align*}
                     II&\leq \int_0^t  \int_{U_0} |\alpha(Z_s', v_s, \xi)|\\& \times (||Z_s'-v_s|-|Z_s-v_s|| + ||X_s'-y_s|-|X_s-y_s||) \l(dv,dy) Q(d\xi) ds
                    \\&\leq K \int_0^t  \int_{U_0} |\alpha(Z_s', v_s, \xi)| (|Z_s'-Z_s| +|X_s'-X_s|) \l(dv,dy) Q(d\xi) ds
                     \\ &\leq K \int_0^t  \int_{\mathbb{D}^2} (|Z_s'|+|v_s|)| (|Z_s'-Z_s| +|X_s'-X_s|) \l(dv,dy) ds 
            \end{align*}
            \end{proof}
In the next Lemma we will use the local Lipschitz condition stated in Lemma \ref{locLip} to prove a local contraction property of $S$ on $S_T^1$.
\begin{lemma}
\label{contraction}
For each $n\in \mathbb{N}$ there exists  constant $L_n>0$, such that
\begin{align*}
&\mathbb{E}[\sup_{s\in [0,t]} |SZ_s-SZ_s'|] \leq  L_n \int_0^t \mathbb{E}[\sup_{s'\in [0,s]} \{|Z_{s'}-Z_{s'}'|+|X_{s'}-X_{s'}'|\}]ds \\
&\quad \forall (Z,X)_{s\in [0,T]}\,, (Z',X')_{s\in [0,T]} \, \in S_T^1,\, \text{with} \,  \sup_{s\in [0,T]}|Z_s|\leq n\,, \sup_{s\in [0,T]}|Z'_s|\leq n. \label{eqcontraction}
\end{align*}
\end{lemma}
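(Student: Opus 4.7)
The plan is to exploit a useful structural feature of the definition of $S$: we have $SZ_t = Z_0 + I(Z)_t$ with $I(Z)$ defined against the \emph{uncompensated} Poisson random measure $N^\l$, so that $SZ_s - SZ'_s = I(Z)_s - I(Z')_s$ is a difference of integrals against a positive random measure. This allows both the absolute value and the time supremum to be brought inside the integral with no need for any martingale-type inequality.

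First, using that $N^\l$ is a positive random measure, I would bound pathwise
\begin{equation*}
\sup_{s \in [0,t]} |SZ_s - SZ'_s| \leq \int_0^t \int_U \bigl| \alpha(Z_{s-}, v_s, \xi) 1_{[0, \s(|Z_{s-} - v_s|)\beta(|X_s - y_s|)]}(r) - \alpha(Z'_{s-}, v_s, \xi) 1_{[0, \s(|Z'_{s-} - v_s|)\beta(|X'_s - y_s|)]}(r) \bigr| \, dN^\l.
\end{equation*}
Next I would take expectations and use the identity $\mathbb{E}[\int f\, dN^\l] = \mathbb{E}[\int f\, \l(dy,dv) Q(d\xi) dr ds]$, valid for nonnegative predictable $f$, to convert the right-hand side into an expectation of a $\l \otimes Q \otimes dr \otimes ds$-integral of the same integrand.

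At this stage Lemma \ref{locLip} does the remaining work: under the hypothesis $\sup_s |Z_s|, \sup_s |Z'_s| \leq n$ (which by assumption holds on all of $\Omega$), it furnishes the pathwise bound of the inner $\l \otimes Q \otimes dr$-integral by $L_n (|Z_s - Z'_s| + |X_s - X'_s|)$. Substituting this bound, applying Fubini to interchange expectation and the outer $ds$-integral, and finally enlarging $|Z_s - Z'_s| + |X_s - X'_s|$ to its running supremum $\sup_{s' \in [0,s]}(|Z_{s'} - Z'_{s'}| + |X_{s'} - X'_{s'}|)$ yields the desired inequality, with the same constant $L_n$ produced by Lemma \ref{locLip}.

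I do not anticipate a serious obstacle, since the uncompensated formulation $SZ = Z_0 + I(Z)$ was chosen precisely so that no Doob or Burkholder--Davis--Gundy estimate is needed here; all of the analytic content was already absorbed into Lemma \ref{locLip}. The only minor technical point is that $Z_{s-}$ appears inside the integrand whereas Lemma \ref{locLip} is stated in terms of $Z_s$, but the two processes agree outside a countable set of jump times and thus coincide $ds$-almost everywhere, so the interchange is harmless.
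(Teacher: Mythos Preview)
Your proposal is correct and matches the paper's own proof essentially line for line: bound $|SZ_s-SZ'_s|$ pathwise by the $dN^\l$-integral of the absolute difference of integrands, drop the supremum by monotonicity, replace $dN^\l$ by its compensator in expectation, invoke Lemma~\ref{locLip}, and enlarge to the running supremum. The paper does not comment on the $Z_{s-}$ versus $Z_s$ issue, but your handling of it is standard and correct.
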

\begin{proof}
\begin{align*}
&\mathbb{E}[\sup_{s\in [0,t]} |SZ_s-SZ_s'| ]\leq \\&
\mathbb{E}[\sup_{s\in [0,t]} \int_0^s  \int_{U}  |\alpha(Z_{s'_-}, v_{s'}, \xi)
1_{[0,\; \s(|Z_{s'_-} - v_{s'}|)\beta(|X_{s'} - y_{s'}|)]}(r) \\&-\alpha(Z'_{s'_-}, v_{s'}, \xi)
1_{[0,\; \s(|Z'_{s'_-} - v_{s'}|)\beta(|X'_{s'} - y_{s'}|)]}(r)| dN_\l ]  \\& 
\leq  
\mathbb{E}[ \int_0^t  \int_{U}   |\alpha(Z_{s'_-}, v_{s'}, \xi)
1_{[0,\; \s(|Z_{s'_-} - v_{s'}|)\beta(|X_{s'} - y_{s'}|)]}(r) \\&  -\alpha(Z'_{s'_-}, v_{s'}, \xi)
1_{[0,\; \s(|Z'_{s'_-} - v_{s'}|)\beta(|X'_{s'} - y_{s'}|)]}(r)| dN_\l ]
\\&=\mathbb{E}[ \int_0^t  \int_{U}   |\alpha(Z_{s'}, v_{s'}, \xi)
1_{[0,\; \s(|Z_{s'} - v_{s'}|)\beta(|X_{s'} - y_{s'}|)]}(r) \\& -\alpha(Z'_{s'}, v_{s'}, \xi)
1_{[0,\; \s(|Z'_{s'} - v_{s'}|)\beta(|X'_{s'} - y_{s'}|)]}(r)|\l(dy dv) Q(d\xi) dr ds']
\end{align*}
\begin{align*}
& \leq L_n \int_0^t \mathbb{E}[ |Z'_{s'}-Z_{s'}|+ |X'_{s'}-X_{s'}| ]ds'
\\&\leq L_n \int_0^t \mathbb{E}[\sup_{s'\in [0,s]} \{|Z'_{s'}-Z_{s'}|+ |X'_{s'}-X_{s'}|\}] ds
\end{align*}
where  we have used Lemma \ref{locLip}.
\end{proof}
In the proof of the next theorem we will use the local contraction property in Lemma \ref{contraction} to prove existence and uniqueness of a modification of the stochastic equation defined through \eqref{eq-velr}, \eqref{eq-spacer}. The modified stochastic equation satisfies global growth and  Lipschitz conditions.

\par
Let $j\in \mathbb{N}$, $B_j:=\{z\in \mathbb{R}^3:\, |z|\leq j\}$ and 
\begin{equation} \label{alpha-loc}
\alpha_j(z,v,\xi):=\frac{\alpha(z,v,\xi)}{1+d(z,B_j)}
\end{equation}
where $d(z,B_j)$ denotes the distance of $z\in \mathbb{R}^3$ from $B_j$. 

\begin{thm}\label{existence-uniqueness Lipschitz}
Let the  random vector  $(Z_0,X_0)$ satisfy \eqref{first moment initial}. 
 For all $T>0$ there  exists a unique solution  on $S_T^1$  of the stochastic  equation    
  \begin {align}
    Z_t^{\l,j} &= Z_0+ \int_0^t \int_{U}\alpha_j(Z_{s_-}^{\l,j}, v_s, \xi)\notag\\
    &\times 1_{[0,\; \s(|Z_{s_-}^{\l,j} - v_s|)\beta(|X_s^{\l,j} - y_s|)]}(r)
 {N}^{\l}(dy,dv,d\xi, dr, ds) \label{eq-velr-j}\\
X_t^{\l,j}& = X_0+\int_0^t Z_s^{\l,j}ds.  \label{eq-spacer-j}
    \end{align}
\end{thm}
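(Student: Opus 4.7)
The plan is to invoke the Banach fixed-point theorem on the Banach space $S_T^1$. The point of the truncation \eqref{alpha-loc} is to upgrade the \emph{local} Lipschitz estimate of Lemma~\ref{locLip}, whose constant $L_n$ grows with the sup-norm of the processes, to a \emph{global} Lipschitz estimate whose constant depends only on $j$ (and on $|v|$, which will be integrable against $\lambda$). Once this global estimate is in hand, the local contraction of Lemma~\ref{contraction} becomes a true contraction on $S_T^1$ and the standard Picard argument gives existence and uniqueness.

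The first step is therefore to establish, for all $z,z',v\in\mathbb{R}^3$,
\begin{align*}
\int_\Xi |\alpha_j(z,v,\xi)|\,Q(d\xi) &\le K_j(1+|v|),\\
\int_\Xi |\alpha_j(z,v,\xi) - \alpha_j(z',v,\xi)|\,Q(d\xi) &\le K_j(1+|v|)\,|z-z'|.
\end{align*}
The first bound follows from \eqref{growth-L} together with the elementary inequality $|z|/(1+d(z,B_j)) \le j$ valid for $j\ge 1$ (since $d(z,B_j)=0$ when $|z|\le j$ and $d(z,B_j)=|z|-j$ otherwise, and the function $t\mapsto (j+t)/(1+t)$ is decreasing for $j\ge 1$). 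For the second, write $\alpha_j = \alpha\cdot h_j$ with $h_j(z):=(1+d(z,B_j))^{-1}$, observe that $h_j$ is $1$-Lipschitz and bounded by $1$, and combine \eqref{B3} with \eqref{growth-L} using the same inequality.

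With these global bounds in hand, I would define $\Phi^j:S_T^1\to S_T^1$ by sending $(Z,X)$ to $(\Phi^j_1(Z,X),\Phi^j_2(Z,X))$, where $\Phi^j_2(Z,X)_t:=X_0+\int_0^t Z_s\,ds$ and $\Phi^j_1(Z,X)_t$ is the right-hand side of \eqref{eq-velr-j} with $(Z^{\lambda,j}_{s_-},X^{\lambda,j}_s)$ replaced by $(Z_{s_-},X_s)$. The argument of Lemma~\ref{stochIntdefined}, now using the first global bound above together with \eqref{L1 condition}, shows that $\Phi^j$ is well-defined and maps $S_T^1$ into itself. Mimicking the proof of Lemma~\ref{contraction} but substituting the \emph{global} Lipschitz bound for Lemma~\ref{locLip} then yields
\[
\mathbb{E}\bigl[\sup_{s\le t}|\Phi^j_1(Z,X)_s-\Phi^j_1(Z',X')_s|\bigr] \le \tilde L_j\int_0^t \mathbb{E}\bigl[\sup_{s'\le s}(|Z_{s'}-Z'_{s'}|+|X_{s'}-X'_{s'}|)\bigr]\,ds
\]
with $\tilde L_j$ independent of the processes. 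Combined with the pathwise bound $|\Phi^j_2(Z,X)_s-\Phi^j_2(Z',X')_s|\le\int_0^s|Z_{s'}-Z'_{s'}|\,ds'$, Picard iteration and a Gronwall-type estimate produce a Cauchy sequence in $S_T^1$ whose limit is the unique fixed point of $\Phi^j$, giving the desired solution of \eqref{eq-velr-j}--\eqref{eq-spacer-j}; uniqueness also follows from the same contraction estimate.

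The main obstacle I expect is the first step: tracking the cases $|z|\le j$ vs.\ $|z|>j$ to extract a clean global Lipschitz constant while keeping the dependence on $|v|$ linear, so that \eqref{L1 condition} is strong enough to make the intensity integrable against the Lipschitz bound. The remainder is a routine adaptation of Lemmas~\ref{stochIntdefined}--\ref{contraction} with global bounds replacing local ones, and the final contraction constant depends on both $j$ and $T$.
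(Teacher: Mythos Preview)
Your proposal is essentially the paper's own argument: derive global growth and Lipschitz bounds for the truncated integrand, deduce that the Picard map on $S_T^1$ satisfies a global version of Lemma~\ref{contraction}, and conclude that some iterate is a contraction (the paper states this exactly as ``there exists $n$ such that $S_j^n$ is a contraction'').

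One step needs tightening. From $h_j$ being merely $1$-Lipschitz and bounded by $1$ together with \eqref{B3} and \eqref{growth-L} you only obtain
\[
\int_\Xi|\alpha_j(z,v,\xi)-\alpha_j(z',v,\xi)|\,Q(d\xi)\le C|z-z'|\bigl(1+|z'-v|\bigr),
\]
which still carries $|z'|$ and is not the global bound $K_j(1+|v|)|z-z'|$ you claim. The missing ingredient is precisely the paper's \eqref{Lipschitz normalized}, namely that $z\mapsto z\,h_j(z)$ is globally $K_j$-Lipschitz; equivalently, the sharper pointwise estimate $|h_j(z)-h_j(z')|\le h_j(z)\,h_j(z')\,|z-z'|$ (verify the three cases $|z|,|z'|\le j$; $|z|\le j<|z'|$; $|z|,|z'|>j$). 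With this in hand the surviving factor $h_j(z')$ absorbs $|z'|$ via $h_j(z')|z'|\le j$, and your bound follows. This is exactly the ``main obstacle'' you anticipated, and once it is filled in the remainder of your outline matches the paper.
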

It then follows directly  the statement of the following corollary:
\begin{corollary}\label{existence-uniqueness Lipschitz compensated}
Let the random vector  $(Z_0,X_0)$ satisfy \eqref{first moment initial}. 
 For all $T>0$ there  exists a unique solution  on $S_T^1$ of the stochastic  equation    
  \begin {align}
   & Z_t^{\l,j} = Z_0\notag\\& +\int_0^t \int_{U}\alpha_j(Z_{s}^{\l,j}, v_s, \xi)1_{[0,\; \s(|Z_{s}^{\l,j} - v_s|)\beta(|X_s^{\l,j} - y_s|)]}(r)
 d\tilde {N}^{\l}\notag\\& + \int_{U_0} \alpha_j(Z_s^{\l,j}, v_s, \xi) \sigma(|Z_s^{\l,j} - v_s|)\beta(|X_s^{\l,j} - y_s|) d\l(dy dv) Q(d\xi)ds \label {eq-velr-j-com}
\\X_t^{\l,j}& = X_0+\int_0^t Z_s^{\l,j}ds.  \label{eq-spacer-j-com}
    \end{align}
\end{corollary}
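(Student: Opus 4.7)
The plan is to derive Corollary \ref{existence-uniqueness Lipschitz compensated} directly from Theorem \ref{existence-uniqueness Lipschitz} via the standard decomposition $N^{\l} = \tilde N^{\l} + (\text{compensator})$. The two systems \eqref{eq-velr-j}--\eqref{eq-spacer-j} and \eqref{eq-velr-j-com}--\eqref{eq-spacer-j-com} are literally the same equation written in two different ways, so once the two pieces appearing on the compensated side are shown to be individually well-defined, the equivalence is a purely formal rewrite.

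The only step with any content is the integrability check. By the definition \eqref{alpha-loc}, the factor $1/(1+d(z,B_j))$ damps the growth of $\alpha$ in the $z$-argument; combined with the pointwise bound $|\alpha(z,v,\xi)| \le |z-v|$ implied by $\alpha=({\bf n},z-v){\bf n}$, one obtains a uniform-in-$z$ bound of the form $|\alpha_j(z,v,\xi)|\le C_j(1+|v|)$. With $\|\s\|_\infty=\|\beta\|_\infty=1$, assumption \eqref{L1 condition} on $\l$, and the growth estimate \eqref{growth}, this yields
\[
\mathbb{E}\int_0^T\!\!\int_U \bigl|\alpha_j(Z^{\l,j}_{s_-},v_s,\xi)\bigr|\,\l(dy,dv)\,Q(d\xi)\,dr\,ds \;<\;\infty,
\]
so that both the compensated stochastic integral against $\tilde N^{\l}$ and the drift integral against $\l(dy,dv)\,Q(d\xi)\,dr\,ds$ appearing in \eqref{eq-velr-j-com} are well-defined in the $L^1$-sense of Lemma \ref{stochIntdefined}.

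Given this integrability, the defining identity
\[
\int_0^t\!\!\int_U g_s\,N^{\l} \;=\; \int_0^t\!\!\int_U g_s\,\tilde N^{\l} \;+\; \int_0^t\!\!\int_U g_s\,\l(dy,dv)\,Q(d\xi)\,dr\,ds
\]
applied with $g_s := \alpha_j(Z^{\l,j}_{s_-},v_s,\xi)\,1_{[0,\,\s(|Z^{\l,j}_{s_-}-v_s|)\beta(|X^{\l,j}_s-y_s|)]}(r)$ transforms \eqref{eq-velr-j} into \eqref{eq-velr-j-com} after the elementary $dr$-integration $\int_0^1 1_{[0,\,\s\beta]}(r)\,dr = \s\beta$ (which is valid because $\s,\beta\le 1$); the replacement of $Z^{\l,j}_{s_-}$ by $Z^{\l,j}_s$ in the drift is immaterial, since c\`adl\`ag paths have at most countably many jump times, a Lebesgue-null set. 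Existence for the corollary is therefore furnished by the unique $S_T^1$-solution of \eqref{eq-velr-j}--\eqref{eq-spacer-j} produced by Theorem \ref{existence-uniqueness Lipschitz}. For uniqueness, one reads the same identity in the reverse direction: any $S_T^1$-solution of \eqref{eq-velr-j-com}--\eqref{eq-spacer-j-com} satisfies the same integrability bound, so the compensated integral and the drift can be recombined into the non-compensated integral against $N^{\l}$, and uniqueness follows from the uniqueness part of Theorem \ref{existence-uniqueness Lipschitz}.

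The only conceivable obstacle is the uniform-in-$z$ bound on $\alpha_j$, but it is built directly into the definition \eqref{alpha-loc}; the remainder of the argument is routine bookkeeping.
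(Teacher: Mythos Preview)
Your proposal is correct and follows exactly the paper's approach: the corollary is deduced from Theorem \ref{existence-uniqueness Lipschitz} by observing that, once the relevant integrals are well-defined, the non-compensated equation \eqref{eq-velr-j}--\eqref{eq-spacer-j} and the compensated equation \eqref{eq-velr-j-com}--\eqref{eq-spacer-j-com} are literally the same equation via $N^{\l}=\tilde N^{\l}+\text{compensator}$. The paper's own proof is in fact a one-line reference to this equivalence, whereas you have (correctly) spelled out the integrability check and the $dr$-integration in more detail.
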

\begin{proof} If the stochastic integrals in  the stochastic equation  \eqref{eq-velr-j}, \eqref{eq-spacer-j} are well-defined then  the stochastic equation  \eqref{eq-velr-j}, \eqref{eq-spacer-j} is equivalent to  the stochastic equation  \eqref{eq-velr-j-com}, \eqref{eq-spacer-j-com}. (See e.g. Chapter 5 \cite{MR}).
\end{proof}

\bf{ Proof of Theorem \ref{existence-uniqueness Lipschitz}}
\begin{proof} We start by remarking that 
\begin{equation*}
\frac{|z|}{1+d(z,B_j)}\leq \min(j,|z|)
\end{equation*}
and there exists a constant $K_j>0$, such that
\begin{equation}\label{Lipschitz normalized}
\left|\frac{z}{1+d(z,B_j)}-\frac{z'}{1+d(z',B_j)}\right| \leq K_j|z-z'|.
\end{equation}
Assume   $(Z,X)_{t\in [0,T]}\in S_T^1$. As $|\alpha_j(z,v,\xi)|\leq |\alpha(z,v,\xi)|$ it follows from Lemma \ref{stochIntdefined} that  the stochastic integrals $(I(Z))_{t\in [0,T]}$ and $(\hat {I}(Z))_{t\in [0,T]}$, with 
\begin{equation*}
\hat{I_j}(Z)_{t} := \int_0^t  \int_{U} |\alpha_j(Z_{s_-}, v_s, \xi)|1_{[0,\; \s(|Z_{s_-} - v_s|)\beta(|X_{s} - y_s|)]}(r)
 d{N}^{\l},
\end{equation*} and 
\begin{equation*}
I_j(Z))_{t}:= \int_0^t  \int_{U}\alpha_j(Z_{s_-}, v_s, \xi)1_{[0,\; \s(|Z_{s_-} - v_s|)\beta(|X_{s} - y_s|)]}(r)
 d{N}^{\l}, 
\end{equation*}
are well defined. Moreover, using  $(\hat{I}_j(Z))_{t}\leq (\hat{I}(Z))_{t} $  $\forall t\in [0,T]$ and \eqref{ineqstochintmod}, it follows  that 
\begin{equation}\label{ineqstochintmodj}
\mathbb{E}[\hat{I_j}(Z))_{T}] \leq K \int_0^T \mathbb{E} [\sup_{s\in [0,t]} |Z_s|] dt +M_T\,, 
\end{equation}
with $K>0$, $M_T>0$. 
Let $ (S_jZ,S_jX)_{t\in [0,T]}$ denote the process defined through 
\begin{equation*}
    S_jZ_t := Z_0 +I_j(Z)_t \;\;\text{and}\;\;\; 
    S_jX_t :=X_0+\int_0^t Z_s ds\,.   
    \end{equation*}
    If the random vector $(Z_0,X_0)$ satisfies \eqref{first moment initial},  then 
    \begin{equation}\label{expsupSj}
            \mathbb{E}[ \sup_{t\in [0,T]} |S_jZ_t| ]\leq   \mathbb{E}[\hat{I_j}(Z)_T] +\mathbb{E}[|Z_0|] 
             \end{equation}
             and, due to the growth condition \eqref{ineqstochintmodj}, $(S_jZ,S_jX)_{t\in [0,T]}\in S_T^1$. 
    
    From \eqref{Lipschitz normalized} and Lemma \ref{locLip} it follows that there is a constant $L_j>0$ such that  
    \begin{align*}
        \int_0^t  \int_{U} & |\alpha_j(z, v_s, \xi)1_{[0,\; \s(|z - v_s|)\beta([|x - y_t|]}(r)- \alpha_j(z', v_s, \xi)1_{[0,\; \s(|z' - v_s|)\beta(|x - y_s|)]}(r)| \\& \l(dv,dy) Q(d\xi) dr ds 
       \\&\leq L_j (|z-z'|+|x-x'|)\quad \text{for all }\,  z,z',x,x'\in \mathbb{R}^3, 
        \end{align*}
        Similar to Lemma \ref{contraction} it can then be proven that the following inequality holds:
        \begin{equation*}\mathbb{E}[\sup_{s\in [0,t]} |S_jZ_s-S_jZ_s'|] \leq  L_j \int_0^t \mathbb{E}[\sup_{s'\in [0,s]} \{|Z_{s'}-Z_{s'}'|+|X_{s'}-X_{s'}'|\}]ds\end{equation*}
        It follows that there exists $n\in \mathbb{N}$ such that  $(S^n_jZ,S^n_jX)_{t\in [0,T]}$ is a contraction from $S_T^1$ to $S_T^1$. It then follows,  that the mapping $S_j$ has a unique fixed point on $S_T^1$.
\end{proof}
{\bf Proof of Theorem \ref{Theorem MR}:}
\begin{proof}
It is sufficient to prove that for all $T>0$ there exists a unique process  $ (Z,X)_{t\in [0,T]}$$\in S_T^1$ satisfying a.s. the following stochastic equation 
\begin {align}
   & Z_t^{\l} = Z_0+ \notag \\&\int_0^t \int_{U}\alpha(Z_{s}^{\l}, v_s, \xi)1_{[0,\; \s(|Z_{s}^{\l} - v_s|)\beta(|X_s^{\l} - y_s|)]}(r)
  {N}^{\l}(dy,dv,d\xi, dr, ds) \label{eq-velr-withoutcom}
\\& X_t^{\l} = X_0+\int_0^t Z_s^{\l}ds,  \label{eq-spacer-withoutcom}
    \end{align}
    \eqref{eq-velr-withoutcom}, \eqref{eq-spacer-withoutcom} is then equivalent to  \eqref{eq-velr}, \eqref{eq-spacer}.

  We will follow the strategy of the proof of Theorem 4.3.1 in \cite{MR}.   
Let $(Z^{\l,j},X^{\l,j})_{t\in [0,T]}$$ \in S_T^1$ be the unique solution of  \eqref{eq-velr-j}, \eqref{eq-spacer-j}. Let
\begin{equation*}
\tau_j:=\inf\{t\in [0,T]:|Z_t^{\l,j}|>j\}
\end{equation*}
By uniqueness of the solution of \eqref{eq-velr-j}, \eqref{eq-spacer-j} it follows that 
\begin{equation*}
Z_t^{\l,j}=Z_t^{\l,j+1}\quad \text{a.s.}, \,\,\,\text{for}\,\,\, t\in [0,\tau_j]\,,
\end{equation*}
giving $\mathrm{P}(\tau_j\leq \tau_{j+1})=1$ $\forall j\in \mathbb{N}$ .

 We will prove 
\begin{equation} \label{fulltime}
\mathrm{P}( \cup_{j\in \mathbb{N}}\{\tau_j=T\})=1.
\end{equation}
It then follows that the a.s. limit process $(Z^\l,X^\l)_{t\in [0,T]}$$=\lim_{j\to \infty}$$(Z^{\l,j},X^{\l,j})_{t\in [0,T]}$  is the solution of  \eqref{eq-velr-withoutcom}, \eqref{eq-spacer-withoutcom}, and hence   \eqref{eq-velr}, \eqref{eq-spacer}.

It follows from \eqref{expsupSj} and  \eqref{ineqstochintmodj} that 
\begin{equation}\label{expsupZj}
            \mathbb{E}[ \sup_{t\in [0,T]} |Z^{\l,j}_t| ]\leq K \int_0^T \mathbb{E} [\sup_{s\in [0,t]} |Z^{\l,j}_s|] dt +M_T  +\mathbb{E}[|Z_0|] 
             \end{equation}
             so that by Gronwall's Lemma 
             \begin{equation}\label{GrownwallZj}
                         \mathbb{E}[ \sup_{t\in [0,T]} |Z^{\l,j}_t| ]\leq \exp{KT} (M_T  +\mathbb{E}[|Z_0|]) 
                          \end{equation}
     It follows 
     \begin{align*} \mathrm{P}(\tau_j<T)&=\mathrm{P}(\sup_{t\in [0,T]} |Z^{\l,j}_t|>j)\\& \leq \frac{1}{j}\mathbb{E}[ \sup_{t\in [0,T]} |Z^{\l,j}_t| ] \leq \frac{1}{j}\exp{KT} (M_T  +\mathbb{E}[|Z_0|]) \end{align*}
     so that 
     \begin{equation} \label{fulltimecomplement}
     \mathrm{P}( \cap_{j\in \mathbb{N}}\{\tau_j<T\})=\lim_{j\to\infty} \mathrm{P}(\{\tau_j< T)\}=0.
     \end{equation}            
\end{proof}
\section{Tightness} 
\rm{In this section, we formulate  an approximating sequence $\{Z^{(n)}, X^{(n)})$ for the McKean-Vlasov limit, and prove the tightness of this sequence by Kurtz's criterion on the Skorohod space $\mathbb{D}\times \mathbb{D}$ with the Skorohod topology.}

\noindent 
Define the processes
\begin{align*}
Z_t^{(0)} & = Z_0\\
X_t^{(0)} &= X_0 + Z_0 t
\end{align*}
for all $t \in [0, T]$.
Let $\mu^{(0)} := \mathcal{L} (Z^{(0)}, X^{(0)})$.  Consider a Poisson random measure $N_{\mu^{(0)} }$ on $U\times [0, T]$ whose compensator measure is given by $d\mu^{(0)} Q(d\xi) dr ds$. 
Let $\tilde{N}_{\mu^{(0)} }$ denote the corresponding compensated Poisson random measure (cPrm).
By the square integrability of $Z_0$ and $X_0$, one has 
\[
\mathbb{E} (\sup_{0 \le t \le T} \left[|Z_t^{(0)}|^2 + |X_t^{(0)}|^2\right] < \infty\,. 
\]
This implies in particular that $\forall T>0$ $(Z^{(0)}, X^{(0)})$ $\in $ $S_T^2\subset $ $S_T^1$, where  $S_T^2$  denotes here  the Banach  space of all adapted c\`adl\`ag processes $(X_t)_{t\in [0,T]}$ with values on $\mathbb{R}^6$ equipped with norm 
\begin{equation} \label{norm}
\|X\|_{S_T^2}:= (\mathbb{E}[\sup_{s\in[0,T]}|X_s|^2])^{1/2}
\end{equation}
(see e.g. page 93 in \cite {MR}). 
In particular it implies that the measure $\mu^{(0)}$ is square integrable, and as a consequence satisfies the assumption \eqref{L1 condition}.  For all $n \ge 0$, define 
\begin{align}
Z_t^{(n+1)} &= Z_0 + \int_0^t \int_U \alpha(Z_s^{(n+1)}, v_s, \xi) 1_{[0, \s(|Z_s^{(n+1)} -v_s|)\beta(|X_s^{(n+1)} - y_s|)]} (r) d\tilde{N}_{\mu^{(n)} } \notag\\
& + \int_0^t \int_{U_0} \hat{\a}(Z_s^{(n+1)}, v_s, \xi) \beta(|X_s^{(n+1)} - y_s|)d\mu^{(n)}Q(d\xi) ds \label{eqnZn}
\end{align}
and 
\begin{equation}
X_t^{(n+1)} = X_0 + \int_0^t Z_s^{(n+1)}ds. \label{eqnXn}
\end{equation}
Here, $\mu^{(n)}$ is the law of $(Z^{(n)}, X^{(n)})$,  and $\tilde{N}_{\mu^{(n)} }$ is the cPrm with compensator measure  given by $d\mu^{(n)} Q(d\xi) dr ds$. 
 Taking $n =0$, it follows from Theorem \ref{Theorem MR} that there exists a unique solution of  $(Z^{(1)}, X^{(1)})$ in  $S_T^1$ solving  \eqref{eqnZn}, \eqref{eqnXn}.  Moreover,  
\begin{align*}
&\mathbb{E}(|Z_t^{(1)}|^2) \\
& \le 3  \mathbb{E}\left[(|Z_0|^2) + \int_0^t \int_{U_0} |\a(Z_s^{(1)}, v_s, \xi)|^2 \s(|Z_s^{(1)} -v_s|)\beta(|X_s^{(1)} - y_s|)d\mu^{(0)}Q(d\xi) ds\right.\\
& \left. + | \int_0^t \int_{U_0} \hat{\a}(Z_s^{(1)}, v_s, \xi) \beta(|X_s^{(1)} - y_s|)d\mu^{(0)}Q(d\xi) ds|^2\right]\\
&\le 3  \mathbb{E}\left[(|Z_0|^2) + \int_0^t \int_{U_0} |\a(Z_s^{(1)}, v_s, \xi)|^2 d\mu^{(0)}Q(d\xi) ds \,\;+ \right.\\
& \left. t\int_0^t \int_{\mathbb{D}^2} | \int_{\Xi}\a(Z_s^{(1)}, v_s, \xi) Q(d\xi)|^2 d\mu^{(0)} \int_{\mathbb{D}^2}|\s(|Z_s^{(1)} -v_s|)\beta(|X_s^{(1)} - y_s|)|^2d\mu^{(0)}ds\right]\\
\end{align*}
by Cauchy-Schwarz inequality; continuing, by a use of \eqref{B1}, \eqref{B3}, and \eqref{B2}, one obtains
\begin{align*}
&\le 3  \mathbb{E}\left[(|Z_0|^2) + C\int_0^t [|Z_s^{(1)}|^2 + |v_s|^2] d\mu^{(0)} ds + Ct\int_0^t [|Z_s^{(1)}|^2 + |v_s|^2]d\mu^{(0)} ds\right]\\
& \le 3\left[ \mathbb{E}|Z_0|^2 + C(1+T) \int_0^t \mathbb{E} |Z_s^{(1)}|^2ds + Ct(1 + T)\mathbb{E}|Z_0|^2\right]
\end{align*}
Hence by the Gronwall inequality, we obtain
\begin{equation}
\mathbb{E}(|Z_t^{(1)}|^2)  \le K_1 E(|Z_0|^2) (1 + K_2t) \label{gr1}
\end{equation}
with $K_1 = 3e^{3CT(1 + T)}$ and $K_2 = C(1 +T)$. It then follows  for $n=1$ that the law  $\mu^{(1)}$ satisfies the assumption \eqref{L1 condition}, so that  there exists a unique strong solution   $(Z^{(2)}, X^{(2)})$  solving \eqref{eqnZn}, \eqref{eqnXn}.   Along similar lines, one obtains
\begin{align*}
&\mathbb{E}(|Z_t^{(2)}|^2) \\
& \le 3 \mathbb{E}\left[|Z_0|^2 + C\int_0^t ( |Z_s^{(2)}|^2  + (|Z_s^{(1)}|^2))ds + CT\int_0^t (|Z_s^{(2)}|^2  + (|Z_s^{(1)}|^2))ds\right]
\end{align*}
so that by the Gronwall inequality, 
\begin{equation}\label{gr2}
\mathbb{E} |Z_t^{(2)}|^2 \le K_1E(|Z_0|^2) (1 + K_1K_2t + \frac{(K_1K_2t)^2}{2})
\end{equation}
Further iterations result that \eqref{eqnZn}, \eqref{eqnXn} has a unique strong solution  $(Z^{(n)}, X^{(n)})$  and  the bound
\[
\mathbb{E} |Z_t^{(n)}|^2 \le K_1E(|Z_0|^2) \sum_{i =0}^n\frac{(K_1K_2t)^i}{i!}\,,
\]
so that for all $n \in \mathbb{N}$, we have
\begin{equation}\label{t1}
\mathbb{E} |Z_t^{(n)}|^2 \le K_1 e^{K_1 K_2 t} \mathbb{E}|Z_0|^2.
\end{equation}
By the definition of $X^{(n)}$, we obtain an upper bound uniformly in $n$ for $\mathbb{E} \left[|Z_t^{(n)}|^2 + |X_t^{(n)}|^2\right]$.
This is uniform boundedness of the sequence at each fixed $t \in [0, T]$.  Using the Burkholder-Davis-Gundy inequality, and proceeding exactly as above, 
one obtains an upper bound $K$ uniformly in $n$ for $\mathbb{E}\left[\sup_{0 \le t \le T}( |Z_t^{(n)}|^2 + |X_t^{(n)}|^2 ) \right]$. It follows in particular that  $(Z^{(n)}, X^{(n)})$ $\in S_T^2\subset S_T^1$. As $S_T^p$, $p=1,2$ are not separable Banach spaces, tightness has however to be proven on the Skorohod space $\mathbb{D} \times \mathbb{D}$ with the Skorohod topology: 

\sni
in order to verify the second requirement in Kurtz's criterion, we consider for any fixed $\d > 0$, 
\begin{align*}
& \mathbb{E}\left[|Z_{t+\d}^{(n)} - Z_t^{(n)}|^2\,|\,\mathcal{F}_t\right]\\
&\le 2\mathbb{E}\left[ \{|\int_t^{t+\d} \int_U \a(Z_s^{(n)}, v_s, \xi)1_{[0, \s(|Z_s^{(n)} - v_s|)\beta(|X_s^{(n)} - y_s|)]}(r) d\tilde{N}_{\mu^{(n-1)}}|^2\right.\\
&\left.\,\,+ |\int_t^{t+\d} \int_{U_0} \hat{\a}(Z_s^{(n)}, v_s, \xi)\beta(|X_s^{(n)} - y_s|)Q(d\xi)d\mu^{(n-1)}ds|^2\}\,| \mathcal{F}_t \right]\\
& \le 2\mathbb{E}\left[\{|\int_t^{t+\d} \int_{U_0} |\a(Z_s^{(n)}, v_s, \xi)|^2 \s(|Z_s^{(n)} - v_s|)\beta(|X_s^{(n)} - y_s|)d\mu^{(n-1)} Q(d\xi)ds\right.  \\
&  +  \d \int_t^{t + \d}\int_{\mathbb{D}^2}| \int_{\Xi} \a(Z_S^{(n)}, v_s, \xi)Q(d\xi)|^2 d\mu^{(n-1)} \times \\
&\left. \,\,\,\int_{\mathbb{D}^2} \s^2(|Z_s^{(n)} - v_s|)\beta^2(|X_s^{(n)} - y_s|)d\mu^{(n-1)}ds\}\,|\,\mathcal{F}_t\right].
\end{align*}
We will call the above expression on the right side  as $ 2\mathbb{E}(A_{\d}^{(n)}\,|\,\mathcal{F}_t)$. Then,
\begin{align*}
\mathbb{E}(A_{\d}^{(n)}\,|\,\mathcal{F}_t) &\le 2C(1  + \d) \int_t^{t + \d} \mathbb{E} (|Z_s^{(n)}|^2 + |Z_s^{(n-1)}|^2 )ds\\
& \le K \d
\end{align*}
for a suitable constant $K > 0$ which is independent of $n$. 
Hence, 
\begin{equation}\label{t2}
\lim_{\d \to 0} \limsup_{n \to \infty}\mathbb{E} (A_{\d}^{(n)}) = 0.
\end{equation}
From \eqref{t1} and \eqref{t2}, we conclude that $\{Z^{(n)}\}$ is tight in $\mathbb{D}$. By the definition of $X^{(n)}$, it follows that 
$\{Z^{(n)}, X^{(n)})$ is tight in $\mathbb{D}^2$.
\section{Distance Between Successive Approximations}
In this section, we give a result on the closeness of the measures $\mu^{(n+1)}$ and $\mu^{(n)}$ as $n \to \infty$. 
\begin{proposition}\label{prop5.1}
 Suppose that $\s$ is in $C_b^{\infty}(\R)$. Then for all $h \in C_b^{\infty}(\R^6)$ and  for any fixed $t \in [0, T]$,  we have
 \begin{equation}
 \mathbb{E}[h(Z_t^{(n+1)}, X_t^{(n+1)}) - h(Z_t^{(n)}, X_t^{(n)})] \to 0 
 \end{equation}
 as $n \to \infty$.
 \end{proposition}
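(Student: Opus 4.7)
The plan is to apply It\^o's formula (exactly as in the proof of Proposition \ref{PropEns}) separately to $h(X_t^{(n+1)}, Z_t^{(n+1)})$ and $h(X_t^{(n)}, Z_t^{(n)})$, take expectations to kill the martingale part, and subtract. Denoting
\[
\mathcal{L}_s^\mu h(x,z) := z\cdot \nabla_x h(x,z) + \int_{U_0}[h(x,z+\alpha(z,v_s,\xi))-h(x,z)]\sigma(|z-v_s|)\beta(|x-y_s|)\mu(dy,dv)Q(d\xi),
\]
the assumption $h\in C_b^\infty(\mathbb{R}^6)$ ensures the usual martingale representation goes through, and one obtains
\[
\Delta_n(t):=\mathbb{E}[h(Z_t^{(n+1)},X_t^{(n+1)})-h(Z_t^{(n)},X_t^{(n)})]=\int_0^t \mathbb{E}\bigl[\mathcal{L}_s^{\mu^{(n)}}h(X_s^{(n+1)},Z_s^{(n+1)})-\mathcal{L}_s^{\mu^{(n-1)}}h(X_s^{(n)},Z_s^{(n)})\bigr]ds.
\]

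I would then split the integrand by adding and subtracting $\mathcal{L}_s^{\mu^{(n)}}h(X_s^{(n)},Z_s^{(n)})$ into two contributions: (i) $\mathcal{L}_s^{\mu^{(n)}}h(X_s^{(n+1)},Z_s^{(n+1)})-\mathcal{L}_s^{\mu^{(n)}}h(X_s^{(n)},Z_s^{(n)})$ and (ii) $\mathcal{L}_s^{\mu^{(n)}}h(X_s^{(n)},Z_s^{(n)})-\mathcal{L}_s^{\mu^{(n-1)}}h(X_s^{(n)},Z_s^{(n)})$. For (i), a second-order Taylor expansion in $z$ of $h(x,z+\alpha)-h(x,z)$, together with the bounded Lipschitz character of $\sigma$, $\beta$ and the estimates \eqref{B1}, \eqref{B3}, \eqref{growth} on $\alpha$, shows that $(x,z)\mapsto \mathcal{L}_s^{\mu^{(n)}}h(x,z)$ is Lipschitz (with a constant depending on $\|\nabla h\|_\infty, \|D^2h\|_\infty$ and the uniform second moments of $\mu^{(n)}$ from Section 4). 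Thus (i) is controlled by $\mathbb{E}[|Z_s^{(n+1)}-Z_s^{(n)}|+|X_s^{(n+1)}-X_s^{(n)}|]$.

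For (ii), since $\mu^{(n)}$ is the law of $(Z^{(n)},X^{(n)})$, introducing an independent copy $(\tilde Z^{(n)},\tilde X^{(n)})$ of $\mu^{(n)}$ and $(\tilde Z^{(n-1)},\tilde X^{(n-1)})$ of $\mu^{(n-1)}$, term (ii) becomes $\mathbb{E}\tilde{\mathbb{E}}[H_h^s(X_s^{(n)},Z_s^{(n)},\tilde X_s^{(n)},\tilde Z_s^{(n)})-H_h^s(X_s^{(n)},Z_s^{(n)},\tilde X_s^{(n-1)},\tilde Z_s^{(n-1)})]$, where $H_h^s(x,z,\tilde x,\tilde z)=\int_\Xi[h(x,z+\alpha(z,\tilde z,\xi))-h(x,z)]\sigma(|z-\tilde z|)\beta(|x-\tilde x|)Q(d\xi)$. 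By Taylor expansion and \eqref{BB2}, $H_h^s$ is a bounded $C^\infty$ function of $(\tilde x,\tilde z)$ (at fixed $(x,z)$), so integrating out $(X_s^{(n)},Z_s^{(n)})$ against $\mu^{(n)}$ recasts (ii) as $\Delta_{n-1}(s)$ applied to a new test function from the same class $C_b^\infty$. One then closes the recursion: combining (i) and (ii), and invoking a Picard-type estimate $\mathbb{E}\sup_{s\le t}|Z_s^{(n+1)}-Z_s^{(n)}|^2\le C\int_0^t\mathbb{E}\sup_{s'\le s}|Z_{s'}^{(n)}-Z_{s'}^{(n-1)}|^2 ds$ (which follows, as in Lemma \ref{contraction}, once one works on a common enlarged stochastic basis carrying all the Poisson random measures simultaneously), one obtains $|\Delta_n(t)|\le C\int_0^t|\Delta_{n-1}(s)|ds + \varepsilon_n$ with $\varepsilon_n\to 0$, which iterates to $\sup_{t\le T}|\Delta_n(t)|\le K(KT)^n/n!\to 0$.

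The main obstacle is the coupling issue in estimating $\mathbb{E}|Z_s^{(n+1)}-Z_s^{(n)}|$: the equations defining $(Z^{(n+1)},X^{(n+1)})$ and $(Z^{(n)},X^{(n)})$ are driven by \emph{different} Poisson random measures $N^{\mu^{(n)}}$ and $N^{\mu^{(n-1)}}$, so one cannot subtract them directly. I expect the proof either enlarges the stochastic basis to realize successive iterates on a common driving measure (exploiting the thinning representation via the indicator $1_{[0,\sigma\beta]}(r)$ which already encodes a rejection/acceptance mechanism on a product space with a fixed Lebesgue component), or avoids the a.s. estimate altogether by working directly at the level of laws and bootstrapping from $\Delta_{n-1}$ to $\Delta_n$ through Gronwall.
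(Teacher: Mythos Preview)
Your overall architecture---It\^o's formula, take expectation, split the generator difference into ``same measure, different state'' and ``same state, different measure'', then iterate via Gronwall---is exactly the paper's. But two of your intermediate steps do not go through as written, and the paper resolves both by working purely at the level of laws rather than pathwise.

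First, your treatment of term (i) appeals to $\mathbb{E}[|Z_s^{(n+1)}-Z_s^{(n)}|]$, which presupposes a coupling of $Z^{(n+1)}$ and $Z^{(n)}$ driven by different Poisson random measures $N^{\mu^{(n)}}$ and $N^{\mu^{(n-1)}}$. You flag this yourself, and the paper simply bypasses it: since $\mathbb{E}[\mathcal{L}_s^{\mu^{(n)}}h(X_s^{(k)},Z_s^{(k)})]=\int \mathcal{L}_s^{\mu^{(n)}}h\,d\mu_s^{(k)}$, term (i) is rewritten as $\int \psi\,(d\mu_s^{(n+1)}-d\mu_s^{(n)})$ for an explicit $\psi$. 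The right quantity to track is therefore
\[
J_{n+1}(t):=\sup_{\phi\in C_b^\infty(\mathbb{R}^6)}\Bigl|\int \phi\,(d\mu_t^{(n+1)}-d\mu_t^{(n)})\Bigr|,
\]
and the resulting inequality is \emph{self-referential}: $J_{n+1}(t)\le K\epsilon + C_1\int_0^t J_{n+1}(s)\,ds + \int_0^t J_n(s)\,ds$. Gronwall absorbs the $J_{n+1}$ on the right, and only then does one iterate in $n$.

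Second, your claim that $H_h^s(x,z,\tilde x,\tilde z)$ (and likewise $(x,z)\mapsto\mathcal{L}_s^{\mu^{(n)}}h(x,z)$) lies in $C_b^\infty$ is not correct: the drift piece $z\cdot\nabla_x h$ is linear in $z$, and for the jump piece $\int_\Xi|h(x,z+\alpha)-h(x,z)|\,Q(d\xi)\le \|\nabla_z h\|_\infty\int_\Xi|\alpha|\,Q(d\xi)\le C(|z|+|\tilde z|)$ by \eqref{growth}, so these functions have linear growth. The paper handles this by invoking tightness and the uniform second-moment bounds of Section~4 to restrict all processes to a ball $B_R$ at cost $K\epsilon$, and by cutting the $\Xi$-integral near $\theta=0$ at further cost $\epsilon$; only after these truncations do the test functions land in $C_b^\infty$, which is what makes the $\sup_\phi$ bound legitimate. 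Your recursion ``(ii) $=\Delta_{n-1}(s)$ for a new test function'' is morally right but only closes once you take the supremum over $C_b^\infty$, since the new test function depends on $s$ and on $\mu^{(n)}$.
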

 \begin{proof}
 By the It\^o formula, we write $\mathbb{E}[h(Z_t^{(n+1)}, X_t^{(n+1)}) - h(Z_t^{(n)}, X_t^{(n)})]$ as $A_1 + A_2$ where
 \[ A_1 = \mathbb{E}[ \int_0^t \{\nabla_x h(Z_s^{(n + 1)}, X_s^{(n+1)}) \cdot Z_s^{(n+1)} - \nabla_x h(Z_s^{(n)}, X_s^{(n)}) \cdot Z_s^{(n)}\}ds].\]
 and 
 \begin{align*}
 A_2 &= \mathbb{E} [ \int_0^t \int_{U_0} \{h(Z_s^{(n + 1)} + \a(Z_s^{(n + 1)}, v_s, \xi), X_s^{(n+1)}) - h(Z_s^{(n + 1)}, X_s^{(n+1)}) \}\\
 &\,\,\,\,\times  \s(|Z_s^{(n + 1)} - v_s|) \beta(|X_s^{(n+1)} - y_s|)d\mu^{(n)}Q(d\xi)ds]\\
 & - \mathbb{E} [ \int_0^t \int_{U_0} \{h(Z_s^{(n)} + \a(Z_s^{(n)}, v_s, \xi), X_s^{(n)}) - h(Z_s^{(n)}, X_s^{(n)}) \} \\
 & \,\,\,\,\times \s(|Z_s^{(n)} - v_s|) \beta(|X_s^{(n)} - y_s|)d\mu^{(n-1)}Q(d\xi)ds]
 \end{align*}
 By tightness, given any $\e > 0$, there exists an $R > 0$ such that 
 \[
 P\{\sup_{0 \le t \le T}|\max \{|Z_t^{(j)}|  +  |X_t^{(j)}|: j = n-1, n, n+1\} > R\} < \e.
 \]
 This is a statement about the measures $\mu^{(n+1)}, \mu^{(n)}$ and $\mu^{(n-1)}$.  Let $B_R$ denote the $R$-ball in $\R^6$. 
 First, we will deal with $A_1$. Clearly, by Cauchy-Schwarz inequality,  
\begin{align*}
&\vert \int_0^t  \int_{B_R^C} \{\nabla_x h(z_s^{(n + 1)}, x_s^{(n+1)}) \cdot z_s^{(n+1)}\}d\mu^{(n+1)}ds - \int_0^t \int_{B_R^C} \{\nabla_x h(z_s^{(n)}, x_s^{(n)}) \cdot z_s^{(n)}\}
d\mu^{(n)}\}ds \vert \\
&\le \norm{\nabla_x h}_{\infty} \sqrt{ \e t} \left[\int_{B_R^C}\int_0^t \{|z_s^{(n+1)}|^2 d\mu^{(n+1)}ds + |z_s^{(n)}|^2 d\mu^{(n)}\}ds\}\right]^{1/2}\\
& \le K_t \sqrt{\e}
\end{align*} 
for a suitable constant $K_t >  0$.
 Restricted to $B_R$, note that the function $g(z, x) = \nabla_x h(z, x)\cdot z$ can be uniformly approximated by functions in $C_b^{\infty}(\R^6)$, so that 
 \begin{equation}
 A_1 \le K\sqrt{\e} + \int_0^t \sup_{\phi \in C_b^{\infty}(\R^6)} |\int_{\R^6} \phi(z, x) \{d\mu_s^{(n+1)} - d\mu_s^{(n)}\}|ds. \label{A1}
 \end{equation}
In order to bound $A_2$, we will split $A_2$ into two parts so that $A_2 \le I_1 + I_2$ where 
\begin{align*}
I_1 &= \mathbb{E} [ \int_0^t \int_{U_0} \{h(Z_s^{(n + 1)} + \a(Z_s^{(n + 1)}, v_s, \xi), X_s^{(n+1)}) - h(Z_s^{(n + 1)}, X_s^{(n+1)}) \}\\
 &\,\,\,\,\times  \s(|Z_s^{(n + 1)} - v_s|) \beta(|X_s^{(n+1)} - y_s|)d\mu^{(n)}Q(d\xi)ds]\\
& -  \mathbb{E} [ \int_0^t \int_{U_0} \{h(Z_s^{(n)} + \a(Z_s^{(n)}, v_s, \xi), X_s^{(n)}) - h(Z_s^{(n)}, X_s^{(n)}) \}\\
 &\,\,\,\,\times  \s(|Z_s^{(n)} - v_s|) \beta(|X_s^{(n)} - y_s|)d\mu^{(n)}Q(d\xi)ds], 
 \end{align*}
 and
 \begin{align*}
 I_2 &= \mathbb{E} [ \int_0^t \int_{U_0} \{h(Z_s^{(n)} + \a(Z_s^{(n)}, v_s, \xi), X_s^{(n)}) - h(Z_s^{(n)}, X_s^{(n)}) \}\;\;\;\;\;\;\;\;\;\;\;\;\;\,\,\,\,\,\,\,\,\,\\
 &\,\,\,\,\times  \s(|Z_s^{(n)} - v_s|) \beta(|X_s^{(n)} - y_s|)d\mu^{(n)}Q(d\xi)ds]\\
 & - \mathbb{E} [ \int_0^t \int_{U_0} \{h(Z_s^{(n)} + \a(Z_s^{(n)}, v_s, \xi), X_s^{(n)}) - h(Z_s^{(n)}, X_s^{(n)}) \} \\
 & \,\,\,\,\times \s(|Z_s^{(n)} - v_s|) \beta(|X_s^{(n)} - y_s|)d\mu^{(n-1)}Q(d\xi)ds].
 \end{align*}
 To bound $I_1$, we will use the notation $z, x$ instead of $z_s, x_s$ in defining the function 
 \[
 \psi(z, x) = \int_{\R^6\times \Xi} \{h(z+ \a(z, v, \xi), x) - h(z, x)\} \s(|z - v|)\beta(|x - y|)d\mu^{(n)}_s Q(d\xi)
 \]
 where  $s$ is  fixed. Given $\e > 0$, there exists an $R > 0$ such that for all $n$, 
 \[
 \mathbb{E}\int_0^t 1_{\{|Z_s^{(n)}|  > R\}} |Z_s^{(n)}|ds < \e.
 \]
 A similar statement holds for $X^{(n)}$ in the place of $Z^{(n)}$. Therefore, 
 we obtain
 \begin{equation}
 \int_{B_R^C}  |\psi(z, x)| (d\mu_s^{(n)} + \mu_s^{(n+1)}) < K \e \label{oneI1}
 \end{equation}
 where $K$ is a suitable constant, and $B_R$ is the $R$-ball in $\R^6$. 
 
 \sni
 Hence, we can focus our attention on $I_1$ when the processes are restricted to values in $B_R$.
 Next, let $\Xi_{\d}$ denote the subset $(0, \d] \times [0, \pi)$ of $\Xi$.  Given $\e > 0$, there exists a $\d > 0$ such that for all $n$ and $s$, 
 \begin{align}
&|\int_{\R^6} \int_{B_R \times \Xi_{\d}}\{h(z+ \a(z, v, \xi), x) - h(z, x)\} \s(|z - v|)\beta(|x - y|)\mu^{(n)}_s(dv, dy) Q(d\xi)|\notag\\
&\;\;\;\;\;\;\;\;\;\;\;\;(d\mu_s^{(n)} + \mu_s^{(n+1)}) \notag\\
& < \e.\label{twoI1}
\end{align}
With this estimate in hand, we observe that the function of $(z, x)$ given by
\[
\int_{{B_R} \times (\d, 2\pi] \times [0, \pi)}\{h(z+ \a(z, v, \xi), x) - h(z, x)\} \s(|z - v|)\beta(|x - y|)d\mu^{(n)}_s Q(d\xi)
\]
is a function in $C_b^{\infty}(\R^6)$. This observation along with \eqref{oneI1} and \eqref{twoI1} yields 
\begin{equation}
|I_1| \le K\e + \int_0^t \sup_{\phi \in C_b^{\infty}(\R^6)} |\int_{\R^6} \phi(z, x) \{d\mu_s^{(n+1)} - d\mu_s^{(n)}\}|ds.\label{eqnI1}
\end{equation}
For bounding $I_2$, we repeat a procedure similar to the one  used for $I_1$ for the function 
\[
g(v, y) = \int_{\R^6 \times \Xi} \{h(z + \a(z, v, \xi), x) - h(z, x)\} \s(|z - v|) \beta(|x - y|) Q(d\xi) \mu_s^{(n)}(dz, dx)
\]
where $s$ is fixed and the notation $v, y$ is used instead of $v_s, y_s$. 
We obtain the estimate
\begin{equation}
|I_2| \le K\e + \int_0^t \sup_{\phi \in C_b^{\infty}(\R^6)} |\int_{\R^6} \phi(v, y) \{d\mu_s^{(n)} - d\mu_s^{(n-1)}\}|ds \label{eqnI2}
\end{equation}
for a suitable constant $K > 0$. 
Combining \eqref{A1}, \eqref{eqnI1}, \eqref{eqnI2}, we conclude that for  suitable constants $K$, and $C_1$, 
\begin{align*}
 &|\int_{\R^6} h(z, x) \{d\mu_t^{(n+1)} - d\mu_t^{(n)}\}| \\
 &\le K\e + C_1 \int_0^t \sup_{\phi \in C_b^{\infty}(\R^6)} |\int_{\R^6} \phi(z, x) \{d\mu_s^{(n+1)} - d\mu_s^{(n)}\}|ds\\
 & + \int_0^t \sup_{\phi \in C_b^{\infty}(\R^6)} |\int_{\R^6} \phi(z, x) \{d\mu_s^{(n)} - d\mu_s^{(n-1)}\}|ds.
\end{align*}
We can take supremum on the left side over $h \in C_b^{\infty}(\R^6)$ and call 
the resulting expression as $J_{n+1}(t)$.
Then we have 
\[
J_{n+1}(t) \le K\e + C_1\int_0^t J_{n+1}(s)ds + \int_0^t J_n(s)ds.
\]
By the Gronwall inequality, 
\begin{align*}
J_{n+1}(t) & \le K\e e^{C_1t} + C_1e^{C_1t}\int_0^t J_n(s)e^{-C_1s}ds\\
& \le K\e e^{C_1t} + C_1e^{C_1t}\int_0^te^{-C_1s}\left[ K\e e^{C_1s} + C_1e^{C_1s}\int_0^s J_{n-1}(r) e^{-C_1 r}dr\right]\\
&\le \cdots \\
&\le K\e e^{C_1 t} (1 + C_1t + \frac{C_1^2t^2}{2!} + \cdots 
 + \frac{(C_1 t)^{n-1}}{(n-1)!})\\
& + C_1^n e^{C_1 t} \int_0^t \int_0^{s_1}\cdots \int_0^{s_{n-1}}J_1(r)drds_{n-1}\cdots ds_1
\end{align*}
which tends to zero as $n \to \infty$ and $\e \to 0$. 
 \end{proof}
 
  \section{Identification of the Limit}
   
   In this section, we will conclude the proof of Theorem \ref{thm-existence-vel} by using tightness,  Proposition \ref{prop5.1} and 
   convergence of martingale problems.
    From the tightness of $\{\mu^{(n)}\}$, we have the existence of  a weakly convergent subsequence  $\{\mu^{(n_k)}\}$. 
     Let its weak limit be denoted by $\mu$. Consider the associated subsequence $\{\mu^{(n_k +1)}\}$ which is also tight so that there exists a further subsequence $\{\mu^{(n_{k_j} +1)}\}$ which converges weakly. We will call its limit as $\nu$. Clearly,  
     $\{\mu^{n_{k_j}}\}$ being a subsequence of $\{\mu^{(n_k)}\}$ converges weakly to $\mu$. Our aim in this section is to identify 
      $\mu$ as a weak solution of the Enskog equation.
     
     \sni
     Let us denote a generic element of $\mathbb{D}\times \mathbb{D}$ as $\omega_1 \times \omega_2$. In the canonical setup 
     on the path space $\mathbb{D}\times \mathbb{D}$, we recall that $\mu^{(n_{k_j} +1)}$ is the solution of the following martingale problem:
      
      \noindent
      For any  function $\phi \in C_b^{2}(\R^3 \times \R^3)$, and any $t \in [0, T]$, 
      \begin{itemize}
      \item[(i)] $\mu_0^{(n_{k_j} +1)} = \mathcal{L}(Z_0, X_0)$ (specified).
      \begin{align*}
    &\hspace{-25pt}(ii)\;\;\; \phi(\omega_1(t), \omega_2(t)) - \phi(\omega_1(0), \omega_2(0)) - \int_0^t \nabla_x\phi(\omega_1(s), \omega_2(s)) \cdot \omega_1(s) 
      ds \\ 
      & - \int_0^t \int_{U_0} \{\phi( \omega_1(s) + \a(\omega_1(s), v(s), \xi), \omega_2(s)) - \phi(\omega_1(s), \omega_2(s))\} \\
      &\;\;\;\;\;\;\;\;\s(|\omega_1(s) 
       - v_s|) \beta(|\omega_2(s) - y_s|) Q(d\xi)\mu^{(n_{k_j})}(dv, dy) ds
       \end{align*}
       is a $\mu^{(n_{k_j} +1)}$-martingale.
      \end{itemize}
     
     \sni
     While it is important to keep the above setup of martingale problems in mind, we will pass on to construct convenient random 
     processes (on a possibly different probability space)  for ease in 
     calculations.  Given that $\mu^{(n_{k_j} + 1)} \to \nu$ and $\mu^{(n_{k_j})} \to \mu$, by the Skorohod representation theorem, we can construct random processes $(Z^{(n_{k_j} + 1)}, X^{(n_{k_j} + 1)})$ 
     and $(Z, X)$ such that $\mathcal{L} (Z^{(n_{k_j} + 1)}, X^{(n_{k_j} + 1)}) = \mu^{(n_{k_j}+1)}$ and 
     $\mathcal{L}(Z, X) = \nu$, and 
     \[(Z^{(n_{k_j} + 1)}, X^{(n_{k_j} + 1)}) \to (Z, X) \;\;\text{a.s.}
     \]
     Independently of $(Z^{(n_{k_j} + 1)}, X^{(n_{k_j} + 1)})$, we  construct processes 
     $(\tilde{Z}^{(n_{k_j})}, \tilde{X}^{(n_{k_j})})$ and $(\tilde{Z}, \tilde{X})$ with 
     \[
     \mathcal{L}(\tilde{Z}^{(n_{k_j})}, \tilde{X}^{(n_{k_j})}) = \mu^{(n_{k_j})};\;\;\mathcal{L}(\tilde{Z}, \tilde{X}) = \mu
     \]
     such that $(\tilde{Z}^{(n_{k_j})}, \tilde{X}^{(n_{k_j})}) \to (\tilde{Z}, \tilde{X})$ a.s. The latter processes are used in this 
     section only to shorten certain expressions and write them in more convenient forms to aid calculations. In terms 
     of $(Z^{(n_{k_j} + 1)}, X^{(n_{k_j} + 1)})$, we are able to write the requirement (ii) in the statement of the martingale problem 
     as follows:
     \noindent
     Fix any finite integer $r$. For any $0 \le s_1 \le \cdots \le s_r \le s < t \le T$, and any choice of bounded $\mathcal{F}_{s_i}$ functions $g_i$
     for all $i = 1, 2, \cdots , r$, we require 
  \begin{align}
  &\mathbb{E} \left[ (\phi(Z_t^{(n_{k_j} + 1)}, X_t^{(n_{k_j} + 1)}) - \phi(Z_s^{(n_{k_j} + 1)}, X_s^{(n_{k_j} + 1)}) - \int_s^t \nabla_x\phi(Z_u^{(n_{k_j} + 1)}, X_u^{(n_{k_j} + 1)}) \cdot  Z_u^{(n_{k_j} + 1)}du \right.\notag\\ 
  & - \int_s^t \int_{U_0} \{\phi(Z_u^{(n_{k_j} + 1)}) + \a(Z_u^{(n_{k_j} + 1)}, v_u, \xi), X_u^{(n_{k_j} + 1)}) - \phi(Z_u^{(n_{k_j} + 1)}, X_u^{(n_{k_j} + 1)})\} \notag\\
  &\left. \;\;\;\;\;\;\;\;\s(|Z_u^{(n_{k_j} + 1)} - v_u|) \beta(|X_u^{(n_{k_j} + 1)} - y_u|) Q(d\xi)\mu^{(n_{k_j})}(dv, dy) du)\Pi_{i=1}^r g_i\right]
   = 0 \label{mgpr}
  \end{align}
   By letting $j \to \infty$, we know that  
   \begin{equation}
   \mathbb{E} |\phi(Z_t^{(n_{k_j} + 1)}, X_t^{(n_{k_j} + 1)})]  - \phi(Z_t,  X_t)|\to 0 \label{term1}
   \end{equation}
   if $t \in D$ where $D:= \{a \in [0, T]: \nu(\Delta (Z_a,  X_a) \not= (0, 0)) = 0\}$.  A similar 
   statement holds when $t$ is replaced by $s$ provided that $s \in D$. 
   The statement that 
   \begin{equation}
   \mathbb{E}|\int_s^t \nabla_x\phi(Z_u^{(n_{k_j} + 1)}, X_u^{(n_{k_j} + 1)}) \cdot  Z_u^{(n_{k_j} + 1)}du  -  \int_s^t \nabla_x\phi(Z_u, X_u) \cdot  Z_udu| 
   \to 0\label{term2}
   \end{equation}
   follows from the $L^2(P)$ boundedness of $\sup_{0 \le u \le T} |Z_u^{(n_{k_j} + 1)}|$ indexed by $j$. Hence, our next objective is to show that the last term 
   on the left side of \eqref{mgpr} converges to a limit.

  For $\phi \in C_b^{2}(\R^3 \times \R^3)$, we will use the notation
    \[
    \norm{\phi_z^{\pr}}_{\infty} = \left(\sum_{i=1}^3 |\frac{\partial \phi}{\p z_i}|^2\right)^{1/2}; \;\;\;\;\norm{\phi_{zz}^{\dpr}}_{\infty} = 
    \left(\sum_{i, j=1}^3 |\frac{\partial^2 \phi}{\p z_i \p z_j}|^2\right)^{1/2}
    \]
    with similar meanings for $\norm{\phi_x^{\pr}}_{\infty}, \norm{\phi_{xx}^{\dpr}}_{\infty}$, and $\norm{\phi_{zx}^{\dpr}}_{\infty}$.

     \sni
     Fix any function $\phi \in  C_b^2(\R^3)$. On $\R^{12} \times \Xi$, define
     \[
     G(z, x, v, y, \xi) =  \{\phi(z + \a(z, v, \xi), x) - \phi(z, x)\} \s(|z - v|)\beta(|x - y|).
     \]
     \noindent
     {\sc Claim:} 
     \begin{align*}
     &\int_0^t |\int_{U_0} G(Z_{s}^{(n_{k_j} + 1)},  X_{s }^{(n_{k_j} + 1)},  v_s, y_s, \xi) \mu^{(n_{k_j})}(dv, dy) dQ\\
    & \;\;\;\; - \int_{U_0} G(Z_s, X_s, v_s, y_s, \xi) \mu(dv, dy) dQ|ds 
    \end{align*}
    converges to  $0$ almost surely.
    \begin{proof}
    Denoting the above expression by 
    $D(Z_s^{(n_{k_j} + 1)}, X_s^{(n_{k_j} + 1)}, Z_s, X_s, \mu^{(n_{k_j})}, \mu)$, we have
    \begin{align*}
     &D(Z_s^{(n_{k_j} + 1)}, X_s^{(n_{k_j} + 1)}, Z_s, X_s, \mu^{(n_{k_j})}, \mu)\\
    & \le D(Z_s^{(n_{k_j} + 1)}, X_s^{(n_{k_j} + 1)}, Z_s, X_s, \mu^{(n_{k_j})}, \mu^{(n_{k_j})}) + D(Z_s, X_s, Z_s, X_s, \mu^{(n_{k_j})}, \mu)\\
    &= I_1 + I_2
    \end{align*}
    for short.  Then, $I_1 \le J_1 + J_2$ where  
    \begin{align*}
    J_1 
    &= \int_0^t \vert \int_{U_0} [ \{ \phi(Z_s^{(n_{k_j} + 1)} + \a(Z_s^{(n_{k_j} + 1)}, v_s, \xi), X_s^{(n_{k_j} + 1)}) - \phi(Z_s^{(n_{k_j} + 1)}, X_s^{(n_{k_j} + 1)})\}\\
    & \;\;\;\; - \{\phi(Z_s + \a(Z_s, v_s, \xi), X_s) - \phi(Z_s, X_s)\}] \\
    & \;\;\;\;\s(|Z_s^{(n_{k_j} + 1)} - v_s|) \beta(|X_s^{(n_{k_j} + 1)} - y_s|)dQ \mu^{(n_{k_j})}(dv, dy)\vert ds, \;\;\text{and} 
    \end{align*}
    \begin{align*}
    J_2 &= \int_0^t \vert \int_{U_0}  \{ \phi(Z_s + \a(Z_s, v_s, \xi), X_s) - \phi(Z_s, X_s)\} \\
    & \;\;\;\;\;\;\{\s(|Z_s^{(n_{k_j} + 1)} - v_s|) \beta(| X_s^{(n_{k_j} + 1)} - y_s|)  - \s(|Z_s - v_s|) \beta(| X_s - y_s|) \}dQ d\mu^{(n_{k_j})}\vert ds
    \end{align*}
    In order to bound $J_1$, we bound $\s$ and $\beta$ by $1$ in the above expression, and then,  we break up the rest of the integrand appearing in $J_1$ as follows, and estimate 
    each part separately.   
    \begin{align*}
    J_1 &\le \int_0^t \int_{\mathbb{D}^2}  \int_{\Xi} \vert [\{\phi(Z_s^{(n_{k_j} + 1)} + \a(Z_s^{(n_{k_j} + 1)}, v_s, \xi), X_s^{(n_{k_j} + 1)}) - \phi(Z_s^{(n_{k_j} + 1)}, X_s^{(n_{k_j} + 1)})\}\\
    & \;\;\;\; - \{\phi(Z_s + \a(Z_s^{(n_{k_j} + 1)}, v_s, \xi), X_s^{(n_{k_j} + 1)}) - \phi(Z_s, X_s^{(n_{k_j} + 1)})\}]\\
    & \;\;\;\; + [\{\phi(Z_s + \a(Z_s^{(n_{k_j} + 1)}, v_s, \xi), X_s^{(n_{k_j} + 1)}) - \phi(Z_s, X_s^{(n_{k_j} + 1)})\} \\
    & \;\;\;\; - \{\phi(Z_s + \a(Z_s, v_s, \xi), X_s^{(n_{k_j} + 1)}) - \phi(Z_s, X_s^{(n_{k_j} + 1)})\}]\\
    & \;\;\;\; + [\{\phi(Z_s + \a(Z_s, v_s, \xi), X_s^{(n_{k_j} + 1)}) - \phi(Z_s, X_s^{(n_{k_j} + 1)})\}\\
    & \;\;\;\; - \{\phi(Z_s + \a(Z_s, v_s, \xi), X_s) - \phi(Z_s, X_s)\}]\vert  dQ  d\mu^{(n_{k_j})}ds\\
    & \le \int_0^t \int_{U_0} [ \norm{\phi_{zz}^{\dpr}}_{\infty} \vert \a(Z_s^{(n_{k_j} + 1)}, v_s, \xi)| |Z_s^{(n_{k_j} + 1)} - Z_s \vert \\
    &\;\;\;\;\;\;\;\;  + \norm{\phi_z^{\pr}}_{\infty}
    \vert \a(Z_s^{(n_{k_j} + 1)}, v_s, \xi) - a(Z_s, v_s, \xi)\vert \\
    &\;\;\;\;\;\;\;\; + \norm{\phi_{zx}^{\dpr}}_{\infty} |\a(Z_s, v_s, \xi)| |X_s^{n_{k_j} + 1} - X_s|]dQ \mu^{(n_{k_j})}(dv, dy)ds
    \end{align*} 
    which tends to zero as $j \to \infty$, by restricting all processes to $B_R$ as in the previous section, and using the bounded convergence theorem. Now, we consider 
    $J_2$.
    \begin{align*}
    J_2 
    &= \int_0^t \int_{U_0} \vert \{ \phi(Z_s + \a(Z_s, v_s, \xi), X_s) - \phi(Z_s, X_s)\} \\
    & \{\s(|Z_s^{(n_{k_j} + 1)} - v_s|) \beta(| X_s^{(n_{k_j} + 1)} - y_s|) \\
    &\;\;\;\;\;\;\;\; - \s(|Z_s - v_s|) \beta(| X_s - y_s|) \}\vert dQ d\mu^{(n_{k_j})}|ds\\
    & \le \int_0^t \int_{U_0}  \norm{\phi_z^{\pr}}_{\infty} |\a(Z_s, v_s, \xi)| \{|Z_s^{(n_{k_j} + 1)} - Z_s| + | X_s^{(n_{k_j} + 1)} - X_s|\} dQ d\mu^{(n_{k_j})}ds\\
    &\le C_1 \int_0^t \int_{\mathbb{D}^2} (|Z_s| + |v_s|) \{|Z_s^{(n_{k_j} + 1)} - Z_s| + | X_s^{(n_{k_j} + 1)} - X_s|\}d\mu^{(n_{k_j})}ds
    \end{align*}
    with $C_1$ as a suitable constant, and the last expression above  $\to 0$ as $j \to \infty$ using arguments as before. 
    
    \sni
    Next, we consider $I_2$ where
    \begin{align*}
    I_2  &= \int_0^t \vert \int_{U_0} \{\phi(Z_s + \a(Z_s, v_s, \xi), X_s) - \phi(Z_s, X_s)\}\\
    &  \s(|Z_s - v_s|) \beta(|X_s - y_s|) dQ (d\mu^{(n_{k_j})} - d\mu) \vert ds.
    \end{align*}
    We write the expression within absolute value as 
    \begin{align*}
    \tilde{\mathbb{E}}& \int_{\Xi} [ \{ \phi(Z_s + \a(Z_s, \tilde{Z}_s^{(n_{k_j})}, \xi), X_s) - \phi(Z_s, X_s)\} \s(|Z_s - \tilde{Z}_s^{(n_{k_j})}|) \beta(|X_s - \tilde{X}_s^{(n_{k_j})}|)\\
     & - \{ \phi(Z_s + \a(Z_s,  \tilde{Z}_s, \xi), X_s) - \phi(Z_s, X_s)\} \s(|Z_s - \tilde{Z}_s|) \beta(|X_s - \tilde{X}_s|)]dQ.
    \end{align*}
    where $\tilde{\mathbb{E}}$ refers to expectation with respect to the random variables  $\tilde{Z}_s^{(n_{k_j})} \tilde{X}_s^{(n_{k_j})}, \tilde{Z}_s$, and $\tilde{X}_s$.  Using this, 
    \begin{align*}
    I_2 & \le \int_0^t  \tilde{\mathbb{E}} [ C \norm{\phi_z^{\pr}}_{\infty}\{ | \tilde{Z}_s^{(n_{k_j})} - \tilde{Z}_s| +  \norm{\phi}_{\infty}(|Z_s| + |\tilde{Z}_s| )\\
    &\;\;\;\;\;\;\;\;\times (|\tilde{Z}_s^{(n_{k_j})} - \tilde{Z}_s| + | \tilde{X}_s ^{(n_{k_j})} - \tilde{X}_s|)\}]ds
    \end{align*} 
    which goes to zero as $j \to \infty$ boundedly by restricting the processes appearing in the above expression to a compact set. By bounded convergence theorem, $I_2 \to 0$.
    This finishes the proof of  Claim.
    \end{proof}
    Next, we observe that 
    \begin{align*}
    &\mathbb{E}  \sup_{0 \le t \le T}  | \int_0^t \int_{U_0} \{\phi(Z_s^{(n_{k_j} + 1)} + \a(Z_s^{(n_{k_j} + 1)}, v_s, \xi), X_s^{(n_{k_j} + 1)}) - \phi(Z_s^{(n_{k_j} + 1)}, X_s^{(n_{k_j} + 1)})\} \\
    & \;\;\;\;\;\;\;\;\s(|Z_s^{n_{k_j} + 1} - v_s|)\beta(|X_s^{(n_{k_j} + 1)} -  y_s|) dQ d\mu^{(n_{k_j})}|^2 ds\\
    & \le \norm{\phi_z^{\pr}}_{\infty}^2 \int_0^T \int_{\mathbb{D}^2} \mathbb{E} (|Z_s^{(n_{k_j} +1)}|^2  + |v_s|^2)d\mu^{(n_{k_j})}ds\\
    & \le C
    \end{align*}
    for a suitable constant $C$ since 
    $
    \mathbb{E} \left(\displaystyle{\sup_{0 \le t \le T}}[ |Z_s^{(n_{k_j} +1)}|^2  + |Z_s^{(n_{k_j})}|^2 ]\right) \le C_1
    $
    
    for a constant $C_1$ which is independent of $n_{k_j}$. 
    \begin{align}
    & \mathbb{E} [\int_0^t |\int_{U_0} G(Z_{s}^{(n_{k_j} + 1)},  X_{s }^{(n_{k_j} + 1)},  v_s, y_s, \xi) \mu^{(n_{k_j})}(dv, dy) dQ\notag\\
    &\;\;\;\; - \int_{U_0} G(Z_s, X_s, v_s, y_s, \xi) \mu(dv, dy) dQ|ds] \;\to 0 \label{term3}
    \end{align}
    By \eqref{term1} - \eqref{term3}, we conclude that the martingale problem posed by the $(n_{k_j} + 1)$st  stochastic differential equation converges to a solution of the martingale problem posed by the stochastic system
  \begin {align}
   Z_t^{\mu} &= Z_0\notag\\
   &+ \int_0^t \int_{U}\alpha(Z_{s}^{\mu}, v_s, \xi)1_{[0,\; \s(|Z_{s}^{\mu} - v_s|)\beta(|X_s^{\mu} - y_s|)]}(r) d\tilde {N}^{\mu}\notag\\
   & + \int_{U_0} \hat{\alpha}(Z_s^{\mu}, v_s, \xi)\beta(|X_s^{\mu} - y_s|) \mu(dy dv) Q(d\xi)ds \label {mu-eq} \\
   X_t^{\mu}& = X_0+\int_0^t Z_s^{\mu}ds,  \label{mu-eq-spacer}.
   \end{align}
    We know from Section 3 that the above system, with $\mu$ given,  has a unique solution.  Since $\mu^{(n_{k_j} + 1)}$ converges in law to $\nu$, and the law of $(Z^{\mu}, X^{\mu})$ is given by $\nu$.  By the result in Section 5, we know that if $\mu^{(n_{k_j})}$ converges in law, then 
     $\mu^{(n_{k_j} +1)}$   also converges in law to the same limit. Hence $\nu  = \mu$, and $\mu$ is a solution of the martingale problem posed by \eqref{mu-eq} and \eqref{mu-eq-spacer}. The proof of Theorem \ref{thm-existence-vel} is thus completed.

\section{Uniqueness of Solutions}
In this section, we will study the uniqueness of solutions to the martingale problem posed by the Enskog equation. Uniqueness is taken in the sense of uniqueness of time marginals of the distribution of the processes $(X, Z)$. 
We consider the Enskog equation under the additional hypothesis that the law of any weak solution of the limit equation 
admits at each time point $t$, a density with respect to the Lebesgue measure on $\R^6$. Such a hypothesis can likely be replaced by 
imposing certain conditions on the functions $\a$ and $\s$ that are expedient though it would take us far away from the physics of the problem.  

\begin{thm}\label{ThmUniq}
Let the  hypotheses used in Theorem \ref{thm-existence-vel} hold. In addition, let $\s$ be in $C_b^{\infty}(\R^1)$.   
Then, for any fixed $t$ in the interval $[0, T]$, the $t$-marginal distribution of weak solutions of the limit equation (given below) is unique within the class of 
Borel probability measures on $\mathbb{R}^6$ that are absolutely continuous with respect to the Lebesgue measure on $\R^6$. 

 \begin{align}
 Z_t & = Z_0 + \int_0^t \int_{\mathbb{D}^2\times \Xi} \alpha(Z_s, v_s, \xi) 1_{_{[0, \s(|Z_s - v_s|)\beta(|X_s - y_s|)]}}(r)
\tilde {N}(dv, dy, d\xi, ds)\nonumber\\
&+ \int_0^t \int_{U_0} \alpha(Z_s, v_s, \xi)\s(|Z_s - v_s|)\beta(|X_s - y_s|) \mu(dy, dv) Q(d\xi) ds
\label{leqn1}\\
X_t &= X_0 + \int_0^t Z_sds\label{leqn2}
\end{align} 
wherein the law of $(X, Z)$ is given by $\mu$,  and the compensator of $\tilde {N}$ is given by 
$\mu(dy, dv)Q(d\xi)dt$. 
\end{thm}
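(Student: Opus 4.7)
The plan is to compare two weak solutions $\mu^1$ and $\mu^2$ of \eqref{leqn1}--\eqref{leqn2} sharing the initial distribution, and to derive a Gronwall inequality for their time marginals in a suitable dual norm. By Proposition \ref{PropEns}, for each $i = 1, 2$ and any $\phi \in C_b^2(\mathbb{R}^6)$,
\[
\langle \mu_t^i, \phi\rangle - \langle \mu_0, \phi\rangle = \int_0^t \langle \mu_s^i, z \cdot \nabla_x\phi + K_{\mu_s^i}\phi\rangle\, ds,
\]
where $K_\mu \phi(z, x) := \int_{U_0}\{\phi(z + \alpha(z, v, \xi), x) - \phi(z, x)\}\sigma(|z-v|)\beta(|x-y|)\mu(dy, dv)Q(d\xi)$. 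Subtracting the two identities and splitting
\[
\langle \mu_s^1, K_{\mu_s^1}\phi\rangle - \langle \mu_s^2, K_{\mu_s^2}\phi\rangle = \langle \mu_s^1 - \mu_s^2, K_{\mu_s^1}\phi\rangle + \langle \mu_s^2, (K_{\mu_s^1} - K_{\mu_s^2})\phi\rangle
\]
expresses $\langle \mu_t^1 - \mu_t^2, \phi\rangle$ as a time integral of two quantities, each linear in $\mu_s^1 - \mu_s^2$.

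Next I would introduce a dual norm $\|\nu\|_\sharp := \sup\{|\langle \nu, \psi\rangle| : \psi \in \mathcal{A}\}$ on signed Borel measures on $\mathbb{R}^6$, where $\mathcal{A}$ is a convergence-determining class of bounded $C^2$ functions (for instance a bounded Lipschitz/$C_b^2$-unit-ball). The absolute continuity hypothesis on both $\mu_t^i$ ensures that $\|\mu_t^1 - \mu_t^2\|_\sharp = 0$ forces $\mu_t^1 = \mu_t^2$ as measures on $\mathbb{R}^6$. For the second summand above, $(K_{\mu_s^1} - K_{\mu_s^2})\phi(z,x)$ is itself an integral against $\mu_s^1 - \mu_s^2$ of a function of $(v, y)$ whose $C^2$ norm is bounded, via the Lipschitz and smoothness properties of $\sigma, \beta, \phi$ and the moment bound \eqref{growth-L}, by $C\|\phi\|_{C_b^2}$; integrating in $\mu_s^2(dz, dx)$ yields a contribution $\leq C\|\phi\|_{C_b^2}\|\mu_s^1 - \mu_s^2\|_\sharp$. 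For the first summand, the key fact to establish is that $z \cdot \nabla_x\phi + K_{\mu_s^1}\phi$ lies in a fixed multiple of $\mathcal{A}$, uniformly in $\mu_s^1$; the regularity of $K_{\mu_s^1}\phi$ in $(z,x)$ is obtained from $\sigma \in C_b^\infty$, Lipschitz $\beta$, and a second-order Taylor expansion of $\phi$ around $z$ that converts the integration against $Q(d\xi)$ into one controlled by $\int|\alpha|^2 Q(d\xi) \leq C(|z|^2 + |v|^2)$ (see \eqref{B2}), thereby neutralizing the singularity of $Q$ near $\theta = 0$.

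Assembling these bounds and taking the supremum over $\phi \in \mathcal{A}$ yields
\[
\|\mu_t^1 - \mu_t^2\|_\sharp \leq C \int_0^t \|\mu_s^1 - \mu_s^2\|_\sharp\, ds,
\]
and Gronwall's lemma, together with $\|\mu_0^1 - \mu_0^2\|_\sharp = 0$, gives the conclusion $\mu_t^1 = \mu_t^2$ for every $t \in [0, T]$. I expect the main obstacle to be the unbounded velocity factor coming from the transport part $z \cdot \nabla_x\phi$ (and from the $|v|$ factors appearing in the moment bounds for $K_{\mu_s^1}\phi$), which prevents an immediate placement of $z \cdot \nabla_x\phi + K_{\mu_s^1}\phi$ in $\mathcal{A}$. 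To handle this I would truncate to $\{|z| \leq R\}$, use the uniform second-moment bound for $Z_t$ established in the existence part (Theorem \ref{thm-existence-vel}) together with Chebyshev to control the tail, and invoke the density hypothesis to rule out any atomic contribution at $|z| = R$; sending $R \to \infty$ in concert with the Gronwall iteration then closes the argument.
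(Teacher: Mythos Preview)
Your overall architecture---weak formulation via It\^o, the splitting
$\langle\mu_s^1,K_{\mu_s^1}\phi\rangle-\langle\mu_s^2,K_{\mu_s^2}\phi\rangle
=\langle\mu_s^1-\mu_s^2,K_{\mu_s^1}\phi\rangle+\langle\mu_s^2,(K_{\mu_s^1}-K_{\mu_s^2})\phi\rangle$,
a dual norm over a test class, and Gronwall---is exactly the route the paper takes. The paper uses $L=C_b^\infty(\R^6)$ as its test class and carries out precisely your two summands (their $G_1$ and $G_2$), together with a separate treatment of the transport term $z\cdot\nabla_x\phi$ (their $D_0$) by moment-based truncation, just as you propose.

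The genuine gap is your identification of \emph{where} the absolute continuity hypothesis enters. You invoke it to rule out atoms at $|z|=R$; but that is not needed (moment bounds alone control the tails, and one can always choose $R$ avoiding an atom). The actual obstruction is that the functions you need to place in your test class---namely $(z,x)\mapsto K_{\mu_s^1}\phi(z,x)$ for the first summand and $(v,y)\mapsto\int_\Xi\{\phi(z+\alpha(z,v,\xi),x)-\phi(z,x)\}\sigma(|z{-}v|)\beta(|x{-}y|)\,Q(d\xi)$ for the second---fail to be $C^2$ (or $C_b^\infty$) near the diagonal $z=v$, because $|z-v|$ and the deflection geometry in $\alpha(z,v,\xi)$ are only Lipschitz there. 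Your claim that these functions have ``$C^2$ norm bounded'' is therefore not justified. The paper resolves this by using $\mu_s\ll\lambda$ to choose, for each $\varepsilon>0$, a $\delta$-neighbourhood $G_\delta$ of the diagonal in $\R^6\times\R^6$ with $(\mu_s\times\mu_s+\mu_s\times\rho_s)(G_\delta)<\varepsilon$; it then replaces $1_{G_\delta^c}$ by a smooth cutoff, after which the resulting integrand \emph{is} smooth in $(z,x)$ (resp.\ $(v,y)$) and lands in the test class. Without this diagonal excision you cannot close the loop that places the integrand back inside $\mathcal{A}$, and the Gronwall step does not start.
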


\begin{proof}  
Let  $h$ be in the space $L:= C_b^{\infty}(\R^3 \times \R^3)$. 
By the It\^o formula, we have
\begin{align}
 \mathbb{E} [h(X_t, Z_t)]  &= \mathbb{E} [h(X_0, Z_0)]  + \mathbb{E}\int_0^t \nabla_x h(X_s, Z_s) \cdot Z_sds\nonumber\\
 &+ \mathbb{E}[\int_0^t \int_{{\mathbb{D}^2}\times \Xi} \{h(X_s, Z_s + \a (Z_s, v_s, \xi)) - h(X_s, Z_s)\}\nonumber\\
 &\;\;\;\;\;\;\;\;\;\;\;\;\;\;\;\;\;\;\;\;\;\;\;\;\;\;\s(|Z_s - v_s|)
 \beta(|X_s - y_s|)\mu(dy, dv)Q(d\xi)ds]. \label{eq3.1}
\end{align}
If there exists another solution of \eqref{leqn1} and \eqref{leqn2}, let us denote it as $(X\pr, Z\pr)$ with its law denoted by $\rho$.  
For each $s \in [0, T]$, the measures $\mu_s$ and $\rho_s$ denote the $s$-marginals of $\mu$ and $\rho$ respectively. 

\par
Using the fact that both $(X_t, Z_t), (X\pr_t, Z\pr_t)$ are  solutions of \eqref{leqn1} and \eqref{leqn2}, and using $\mathbb{E}$ to denote 
expectation of functions of $(X_t, Z_t)$ as well as $ (X\pr_t, Z\pr_t)$, we have
\begin{align*}
& \mathbb{E} h(X_t, Z_t) - \mathbb{E} h(X\pr_t, Z\pr_t)\\
&= \mathbb{E}\int_0^t\left[\nabla_x h(X_s, Z_s) \cdot Z_s - \nabla_x h(X\pr_s, Z\pr_s) \cdot Z\pr_s\right]ds\\
&+ \mathbb{E}[\int_0^t \int_{\mathbb{D}^2}\int_{\Xi}  \{h(X_s, Z_s + \a (Z_s, v_s, \xi)) - h(X_s, Z_s)\}\s(|Z_s - v_s|)\\
&\;\;\;\;\;\;\;\;\;\;\;\;\;\;\;\;\;\;\;\;\;\;\;\;\beta(|X_s - y_s|)\mu(dy, dv)
Q(d\xi)ds] \\
&\;\;\;- \mathbb{E}[\int_0^t \int_{\mathbb{D}^2}\int_{\Xi}  \{h(X\pr_s, Z\pr_s + \a (Z\pr_s, v\pr_s, \xi)) - h(X\pr_s, Z\pr_s)\}\s(|Z\pr_s - v\pr_s|)\\
&\;\;\;\;\;\;\;\;\;\;\;\;\;\;\;\;\;\;\;\;\;\;\;\;\beta(|X\pr_s - y\pr_s|)\rho(dy\pr, dv\pr)Q(d\xi)ds]\\
\end{align*}
\begin{align*}
&= \mathbb{E}\int_0^t\left[\nabla_x h(X_s, Z_s) \cdot Z_s - \nabla_x h(X\pr_s, Z\pr_s) \cdot Z\pr_s\right]ds\\
& + \int_0^t \int_{\mathbb{D}^2}\int_{\mathbb{D}^2}\int_{\Xi}  \{h(x_s, z_s + \a (z_s, v_s, \xi)) - h(x_s, z_s)\}\\
&\;\;\;\;\;\;\;\;\;\;\;\;\;\;\;\;\;\;\;\;\;\;\;\;\s(|z_s - v_s|)
\beta(|x_s - y_s|)\mu(dx, dz)\mu(dy, dv)Q(d\xi)ds\\
&\;\;\; -  \int_0^t \int_{\mathbb{D}^2}\int_{\mathbb{D}^2}\int_{\Xi}  \{h(x\pr_s, z\pr_s + \a (z\pr_s, \Pi_s(v\pr), \xi)) - h(x\pr_s, z\pr_s)\}\\
&\;\;\;\;\;\;\;\;\;\;\;\;\;\;\;\;\;\;\;\;\;\;\;\;\s(|z\pr_s - v\pr_s|)\beta(|x\pr_s - y\pr_s|)\rho(dx\pr, dz\pr)\rho(dy\pr, dv\pr)Q(d\xi)ds\\
& = D_0 + D_1.
\end{align*}
where 
\begin{equation}
D_0 := \mathbb{E}\int_0^t\left[\nabla_x h(X_s, Z_s) \cdot Z_s - \nabla_x h(X\pr_s, Z\pr_s) \cdot Z\pr_s\right]ds\label{d0}
\end{equation}
\begin{align}
D_1 &:= \int_0^t \int_{\mathbb{D}^2}\int_{\mathbb{D}^2}\int_{\Xi}  [h(z_s + \a (z_s, v_s, \xi)) h(z_s)]\s(|z_s - v_s|)\nonumber\\
&\;\;\;\;\;\;\;\;\;\;\;\;\; \beta(|x_s - y_s|)
\mu(dy, dv)\mu(dx, dz)Q(d\xi)ds\nonumber\\
& - \int_0^t \int_{\mathbb{D}^2}\int_{\mathbb{D}^2}\int_{\Xi}  [h(z\pr_s + \a (z\pr_s, v\pr_s, \xi)) - h(z_s)] \s(|z\pr_s - v\pr_s|)\nonumber\\
&\;\;\;\;\;\;\;\;\;\;\;\;\;
\beta(|x\pr_s - y\pr_s|)\rho(dy\pr, dv\pr)\rho(dx\pr, dz\pr)Q(d\xi)ds.\label{d1}
\end{align}

We will first obtain an upper bound for $D_0$.  Define $f(x, z) := \nabla_x h(x, z)\cdot z$. 

Fix any $R > 0$. If $|z| <R$, then $f$ is in the space $L$. By existence of the second moments of $\sup_{0 \le t \le T}|Z_t|$ and 
$\sup_{0 \le t \le T}|Z_t\pr|$, we obtain that
for any fixed $t \in [0, T]$, given any $\epsilon > 0$, we can find a compact set $K$ in $\R^6$ such that 
\[
\int_{K^c} |f(x, z)| \{\mu_t(dx, dz) + \rho_t(dx, dz)\} < \frac{\epsilon}{2T}
\]
Therefore, we have 
\[
|D_0| \le \e/2 +\int_0^t |\int_K f(x, z) \{\mu_t(dx, dz) - \rho_t(dx, dz)\}|dt
\]
Let $R$ be large enough so that $K \subset B_R$ where $B_R$ denotes the $R$-ball in $\R^6$. The function $f(x, z)1_K(x, z)$ can be 
approximated by a function $f_{\delta}$ in $L$ by replacing $1_K$ by a smooth non-negative function which is identically equal to $1$ on $K$ and whose support 
is in the $\delta$ neighborhood of $K$, $\d > 0$,  which is contained in $B_R$. As $\delta$ tends to $0$, $f_{\delta} \to f1_K$, pointwise. By the bounded convergence theorem, 
\[
\lim_{\delta \to 0} \int_0^t \int_K |f(x, z) - f_{\delta}(x, z)| \{\mu_t(dx, dz) + \rho_t(dx, dz)\}dt = 0.
\]
Hence, for all $\delta$ small enough, 
\[
\int_0^t \int_K |f(x, z) - f_{\delta}(x, z)| \{\mu_t(dx, dz) + \rho_t(dx, dz)\}dt < \e/2.
\]
The above estimates allow us to conclude that 
\[
D_0 \le \e + \int_0^t |\int f_{\delta}(x, z)\{\mu_t(dx, dz) - \rho_t(dx, dz)\}|dt.
\]
Since $ f_{\delta} \in L$, the above inequality ipso facto yields  
\[
|D_0| \le \e + \int_0^t \sup_{\phi \in L} |\int \phi(x, z) \{\mu_t(dx, dz) - \rho_t(dx, dz)\}|dt.
\]
Next, we proceed to estimate $D_1$. 
On the right side of equation \eqref{d1}, we add and subtract 
\begin{align*}
&\int_0^t \int_{\mathbb{D}^2}\int_{\mathbb{D}^2}\int_{\Xi}  [h(x_s, z_s + \a(z_s, v_s, \xi)) - h(x_s, z_s)] \s(|z_s - v_s|)\beta(|x_s - y_s|)\nonumber\\
&\;\;\;\;\;\;\;\;\;\;\;\;\;\mu(dy, dv)\rho(dx, dz)Q(d\xi)ds
\end{align*}
This enables us to split $D_1$ and write it as $G_1 + G_2$, 
 where 
\begin{align*}
G_1 &:= \int_0^t \int_{\mathbb{D}^2}\int_{\mathbb{D}^2}\int_{\Xi}  [h(x_s, z_s + \a(z_s, v_s, \xi)) - h(x_s, z_s)]\s(|z_s - v_s|)\beta(|x_s - y_s|)\nonumber\\
&\;\;\;\;\;\;\;\;\;\;\;\;\; \mu(dx, dz)\mu(dy, dv)Q(d\xi)ds\nonumber\\
& - \int_0^t \int_{\mathbb{D}^2}\int_{\mathbb{D}^2}\int_{\Xi}  [h(x_s, z_s + \a(z_s, v_s, \xi)) - h(x_s, z_s)] \s(|z_s - v_s|)\beta(|x_s - y_s|)\nonumber\\
&\;\;\;\;\;\;\;\;\;\;\;\;\;\mu(dy, dv)\rho(dx, dz)Q(d\xi)ds.
\end{align*}
and 
\begin{align*}
G_2 &:= \int_0^t \int_{\mathbb{D}^2}\int_{\mathbb{D}^2}\int_{\Xi}  [h(x_s, z_s + \a(z_s, v\pr_s, \xi))  - h(x_s, z_s)] \s(|z_s - v\pr_s|)\beta(|x_s - y\pr_s|)\nonumber\\
&\;\;\;\;\;\;\;\;\;\;\;\;\;\mu(dy\pr, dv\pr)\rho(dx, dz)Q(d\xi)ds\nonumber\\
& -  \int_0^t \int_{\mathbb{D}^2}\int_{\mathbb{D}^2}\int_{\Xi} [h(x\pr_s, z\pr_s + \a(z\pr_s, v\pr_s, \xi))  -  h(x\pr_s, z\pr_s)] \s(|z\pr_s - v\pr_s|)\beta(|x\pr_s - y\pr_s|)\nonumber\\
&\;\;\;\;\;\;\;\;\;\;\;\;\;\rho(dy\pr, dv\pr)\rho(dx\pr, dz\pr)Q(d\xi)ds.
\end{align*}
To find an upper bound for $G_1$, first introduce, for each $s \in [0, T]$, the function $f_s$ defined on $\R^6$ by
\begin{equation*}
f_s(x, z) = \int_{\R^6\times \Xi} [h(x, z + \a(z, v, \xi)) - h(x, z)]\s(|z-v|)\beta(|x-y|) Q(d\xi)\mu_s(dy, dv).
\end{equation*}
Then $G_1$ can be written as 
\[
\int_0^t \int_{\mathbb{D}^2}f_s(x_s, z_s) (\mu(dx, dz) - \rho(dx, dz)).
\]
We will now use the assumption in the statement of the theorem
 that for each $s$, $\mu_s << \lambda$, and $\rho_s << \lambda$,  where $\lambda$ is the Lebesgue 
measure on $\R^6$. Hence the product measures $\mu_s\times \mu_s$ and $\mu_s\times \rho_s$ are both 
absolutely continuous with respect to the Lebesgue measure on $\R^{12}$. Therefore, given $\e > 0$, there exists a 
$\d$-neighborhood $G:= G_{\d}$ of the diagonal in $\R^6\times \R^6$ such that 
\[
(\mu_s\times\mu_s + \mu_s\times\rho_s)(G) < \e
\]
For a suitable constant $C > 0$, we have
\begin{align}
| G_1| &\le  C\e + \int_{\R^6} \int_{\R^6\times \Xi} 1_{G^c}(x, z, y, v)[h(x, z + \a(z, v, \xi)) - h(x, z)]\s(|z-v|)\nonumber\\
&\;\;\;\;\;\;\;\;\;\;\;\;\;\;\;\;\;\;\;\;\;\;\;\;\beta(|x-y|) Q(d\xi)\mu_s(dy, dv)(\mu_s(dx, dz) - \rho_s(dx, dz)). \label{g1.1}
\end{align}
We  approximate the function $1_{G^c}$ by a smooth bounded function $\phi$ which takes the value $1$
in $G^c$, and  zero in $G_{\d/2}$. With such a choice of $\phi$, we have
\begin{align}
\int_{\R^{12}\times \Xi} &|\{1_{G^c}(y, v) - \phi\}(x, z, y, v)[h(x, z + \a(z, v, \xi)) - h(x, z)]\nonumber\\
&\;\;\;\;s(|z-v|)\beta(|x-y|) Q(d\xi)|\mu_s(dy, dv)\{\mu_s(dx, dz) + \rho(dx, dz)\}\nonumber\\
 & <  C\e.\label{g1.2}
 \end{align}
 By \eqref{g1.1} and \eqref{g1.2}, it follows that
 \begin{align}
 |G_1| &\le 2C\e + |\int_{\R^6} \int_{\R^6\times \Xi} \phi(x, z, y, v)[h(x, z + \a(z, v, \xi)) - h(x, z)]\\
 &\;\;\;\;\;\;\;\;\;\;\;\; \s(|z-v|)\beta(|x-y|) Q(d\xi)|\mu_s(dy, dv)(\mu_s(dx, dz) - \rho_s(dx, dz))|.\label{g1.3}
 \end{align}
 The integral
 \[
 \int_{\R^6\times \Xi} \phi(x, z, y, v)[h(x, z + \a(z, v, \xi)) - h(x, z)]\s(|z-v|)\beta(|x-y|) Q(d\xi)|\mu_s(dy, dv)
 \] is smooth and bounded 
  as a function of $x, z$, and therefore it is in $L$. Hence, \eqref{g1.3} yields
  \begin{equation}
  |G_1| \le 2C\e +  \int_0^t \sup_{\psi \in L} |\int \psi(x, z) \{\mu_t(dx, dz) - \rho_t(dx, dz)\}|dt.\label{g1.4}
  \end{equation}
 
To bound $G_2$, we introduce the function $g_s$ on $\R^6$ by 
\[
g_s(y, v) := \int [h(x, z + \alpha(z, v, \xi)) - h(x, z)] \s(|z - v|) \beta(|x - y|)Q(d\xi) \rho_s(dx, dz).
\]
Following the ideas used in finding an upper bound for $G_1$, one can, mutatis mutandis,  bound  $G_2$, and conclude that 
\begin{equation}
|G_2| \le  2C\e  + \int_0^t \sup_{\psi \in L} |\int \psi(y, v) \{\mu_t(dy, dv) - \rho_t(dy, dv)\}|dt.\label{g2.1}
\end{equation}

\par
In summary, we have for any $\e > 0$, 
\begin{equation}
|E[h(X_t, Z_t) - h(X\pr_t, Z\pr_t)]| \le  5C\e +  3\int_0^t \sup_{\phi \in L}|\mathbb{E}[\phi(X_s, Z_s)] - \mathbb{E}[\phi(X\pr_s, Z\pr_s)]|\,ds
\end{equation}
for all $ h \in L$, for a suitable constant $C > 1$.  With $\e$ being arbitrary, let $\e \to 0$. By the Gronwall  Lemma, the uniqueness of the time-marginal distributions of weak solutions is obtained. 
\end{proof} 
\begin{Remark}
In the context of martingale problems, uniqueness of distribution of the time marginals of solutions implies  uniqueness of the law on the path space (\cite{KSu}, page 69). Hence, Theorem \ref{ThmUniq} gives us uniqueness in law of the solution of \eqref{leqn1} and 
\eqref{leqn2} on $\mathbb{D}\times \mathbb{D}$.
\end{Remark}

\section{Invariant Gaussian density for velocity}
Let $\{X_s, Z_s\}_{s\in \mathbb{R^+}}$ be a solution of   \eqref{eq-vel},\eqref{eq-space}, which corresponds to the Enskog equation in the kinetic theory of gases. Let  MVN $(0, I)$ denote the  standard normal distribution on $\R^3$, where $0$ stands for the mean vector, and $I$, for the $3\times 3$ identity matrix  for the variance. In the following "density of  measures"  shall be understood relative to the underlying Lebesgue measure.
\begin{thm}\label{invsol} Let us assume that the  law of the initial velocity $Z_0$ and that of the initial location $X_0$ of  \eqref{eq-vel},\eqref{eq-space}  are independent. Let $Z_0$ have  MVN $(0, I)$ distribution.  Assume that the distribution $\eta_0$ of the  initial location $X_0$ has density $h(x)$, $x\in$ $\mathbb{R}^3$.  Then 
 the joint distribution  $\mu(dx, dz)$ of $\{X_s, Z_s\}_{s\in \mathbb{R^+}}$  has for all $t \ge 0$ density $\rho_t(x,y)$$:=h_t(x)g(y)$, where  $ g(y)$ is the density of the normal distribution MVN $(0, I)$, while $h_t(x)$ is the density of $X_t$.
\end{thm}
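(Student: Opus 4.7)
\medskip\noindent\textbf{Proof plan.}

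The plan is to verify that the candidate measure $\nu_t(dx,dz) := h_t(x)\,g(z)\,dx\,dz$, where $h_t$ denotes the marginal density of $X_t$ and $g$ is the standard Gaussian density on $\R^3$, satisfies the weak Enskog equation \eqref{MWBEA}, and then to invoke the uniqueness result of Theorem \ref{ThmUniq} to conclude $\mu_t=\nu_t$, which yields the claimed product structure.

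The crucial input is that the Maxwellian is an equilibrium of the collision kernel. I would first show that, under the product ansatz, the collision contribution $\langle \nu_t, L^\beta_{\nu_t}\psi\rangle$ vanishes identically. The argument combines two ingredients: the Tanaka-type symmetry of Proposition \ref{PropAppTanaka}, available since the collision map \eqref{eqn1.4} is involutive with unit Jacobian (Remark \ref{Remoutcomevel}); and the energy-conservation identity $|u|^2+|v|^2=|u^\star|^2+|v^\star|^2$, which gives $g(u)g(v)=g(u^\star)g(v^\star)$, together with $\sigma(|u-v|)=\sigma(|u^\star-v^\star|)$. Substituting the ansatz $\nu_t = h_t\otimes g$, the $(x,y)$-integration against $\beta(|x-y|)h_t(x)h_t(y)$ decouples from the velocity integration, and the latter, of the form $\int[\psi(x,u^\star)-\psi(x,u)]\sigma(|u-v|)g(u)g(v)\,du\,dv\,Q(d\theta)d\phi$, vanishes by the symmetric exchange $(u,v)\leftrightarrow(u^\star,v^\star)$.

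With the collision term neutralized, the Enskog equation for $\nu_t$ reduces to the compatibility between the transport term and the evolution of $h_t$. Here I would exploit the zero-mean property $\int z\,g(z)\,dz=0$: testing against functions of the form $\psi(x,z)=\phi(x)$ shows that $\int(z\cdot\nabla_x\phi)\,h_t(x)g(z)\,dx\,dz$ vanishes, identifying the right transport law for $h_t$ consistent with $X_t=X_0+\int_0^t Z_s\,ds$. More generally, for $\psi(x,z)=\phi(x)\chi(z)$, the factored integrals reduce to moments of $g$ paired with transport of $h_t$, which match the evolution of $\mu_t$ under the SDE \eqref{eq-vel}, \eqref{eq-space}. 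An It\^o-formula computation on $\psi(X_t,Z_t)$ under the law $\nu_t$ closes this verification.

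Finally, since $\nu_t$ is absolutely continuous with respect to Lebesgue measure on $\R^6$ and shares the initial condition $\nu_0 = h\otimes g$ with $\mu_t$ (by the assumed independence of $X_0$ and $Z_0$), Theorem \ref{ThmUniq} forces $\mu_t=\nu_t$ for all $t\in[0,T]$. The $T$ was arbitrary, giving the claim for all $t\ge 0$. I expect the main obstacle to be Step~2: confirming that the product ansatz is genuinely preserved by the coupled dynamics, since free streaming $X_t=X_0+\int_0^t Z_s\,ds$ would ordinarily induce position-velocity correlations; the Maxwellian symmetry of the collision kernel, together with the zero-mean and isotropy of $g$, must do the work of ensuring the ansatz is self-consistent at all $t$.
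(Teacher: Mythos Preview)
Your central mechanism --- killing the collision term via Proposition~\ref{PropAppTanaka} and the identity $g(u)g(v)=g(u^\star)g(v^\star)$ (from energy conservation) together with $\sigma(|u-v|)=\sigma(|u^\star-v^\star|)$ --- is exactly what the paper does. The paper, however, does not work with the full weak Enskog equation against general $\psi(x,z)$: it applies It\^o's formula only to $e^{i(\lambda,Z_t)}$, i.e.\ to test functions that are independent of $x$. That choice makes the transport term $\langle\mu_t, z\cdot\nabla_x\psi\rangle$ vanish identically, and the resulting equation for the characteristic function of $Z_t$ reduces to $\phi_t(\lambda)=\phi_0(\lambda)+I$ with $I=-I$, hence $I=0$. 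So the paper only establishes that the \emph{velocity marginal} is preserved as MVN$(0,I)$, and that $X_t$ consequently has a density $h_t$.

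This distinction matters for two gaps in your plan. First, invoking Theorem~\ref{ThmUniq} is circular: that theorem gives uniqueness \emph{within} the class of time-marginals absolutely continuous with respect to Lebesgue measure on $\R^6$, but the solution $\mu_t$ produced by Theorem~\ref{thm-existence-vel} is not known a priori to be absolutely continuous --- indeed, Theorem~\ref{invsol} is the place where a density is first exhibited. You cannot assume $\mu_t\ll dx\,dz$ in order to conclude it. Second, the obstacle you flag in your final paragraph is genuine and not bypassable along the route you sketch: if you test the free-transport equation $\partial_t\langle\nu_t,\psi\rangle=\langle\nu_t,z\cdot\nabla_x\psi\rangle$ against $\psi(x,z)=\phi(x)\chi(z)$ with $\nu_t=h_t\otimes g$, you get $\big(\int\chi g\big)\partial_t\int\phi h_t=\big(\int z\chi g\big)\cdot\int\nabla_x\phi\, h_t$, which for $\chi(z)=z_j$ forces $\partial_{x_j}h_t\equiv 0$. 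So the product ansatz does \emph{not} solve the full weak Enskog equation unless $h_t$ is spatially constant. The paper sidesteps this by never testing against $x$-dependent functions; its argument is a self-consistency check that the velocity marginal stays Gaussian under the ansatz, and the joint product structure is asserted rather than derived from the weak equation. Your route, as written, would need to confront this directly, and it cannot be closed by the zero-mean property of $g$ alone.
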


\begin{Remark} In particular   the  marginal velocity  $Z_t$ at time $t$  is distributed according to the MVN $(0, I)$ distribution for all $t \ge 0$, and is independent of the location $X_t$ for all $t \ge 0$. 
\end{Remark}

\begin{proof} Our method of proof relies on guessing the solution and proving that it is indeed the solution. We take 
\begin{align} \label{ind}
\mu_t(dz, dx) = \mu_t(dz\,|\,x) \eta_t(x).
\end{align}
with 
\begin{align} \mu_t(dz\,|\,x):=g(z) dz \quad\forall t\geq 0\end {align} and 
\begin{align}  \label{Gauss} \eta_t(dx):= h_t(x) dx \quad \forall t\geq 0,\end {align} where 
$h_t(x)$ is a probability  density function on $\mathbb{R}^3$ which is ascertained below. We will then  prove that  $\mu_t(dz, dx)$ is the distribution of a process $\{X_s, Z_s\}_{s\in \mathbb{R^+}}$ which solves   \eqref{eq-vel},\eqref{eq-space} with 
$Z_s$ having the  same distribution as $Z_0$ and solving  \eqref{eq-vel}. It would then follow that  $\int_0^t Z_s ds$ is  a Gaussian random variable  for all $t\geq 0$, and $X_t$  has therefore a density function denoted by  $h_t(x)$.

\par 
Consider $\phi_t(\l ): = \mathbb{E} \left[e^{i(\l , Z_t)}\right]$ for any $\l \in \R^3$, and $t \ge 0$.  It is enough to  prove that $\phi_t(\l ) = \phi_0(\l )$ for all $\l$. Using the It\^o formula and taking expectation, one obtains 
\begin{align}
\phi_t(\l ) &= \phi_0(\l ) + \int_0^t \int_{\mathbb{D}\times \mathbb{D} \times \Xi} \left\{e^{i(\l , z_s + \a (z_s, v_s, \xi))} - e^{i(\l , z_s)}\right\}\notag\\
&\;\;\;\;\;\;\;\;\;\;\;\;\s (|z_s - v_s|)\beta(|x-y|)\mu_s(dz, dx) \mu_s(dv, dy)Q(d\xi)ds.
\label{eqn4.1}
\end{align}
This is an equation that is satisfied by the characteristic function of $Z_t$ where $Z$ is a solution of  \eqref{eq-vel}. If $\mu_t(dz, dx)$ is as specified above, then we can write 
\begin{align}
\phi_t(\l) &= \phi_0(\l ) + \int_0^t  \int_{\mathbb{R}^6\times \mathbb{R}^6 \times \Xi} \left\{e^{i(\l , z_s + \a (z_s, v_s, \xi))} - e^{i(\l , z_s)}\right\}
\notag\\
& \;\;\s (|z_s - v_s|)\beta(|x - y|) g(z) h_s(x) dz dx g(v) h_s(y) dv dyQ(d\xi)ds
\label{eqn4.2}
\end{align}
which we write as $ \phi_0(\l) + I.$
Let us  write $I$ as 
\begin{align*}
I &=  \int_0^t  \int_{\mathbb{R}^6 \times \Xi} \phi(s, x, y, \xi) h_s(x) h_s(y) Q(d\xi)dxdyds
\end{align*}
where \[
\phi(s, x, y, \xi) := \int_{\R^6}\left\{e^{i(\l , z_s + \a (z_s, v_s, \xi))} - e^{i(\l , z_s)}\right\}\s (|z_s - v_s|)\beta(|x - y|) g(z) dz g(v) dv.
\] 
 Then $\phi(s, x, y, \xi)$ is
\begin{align*}
& = \int_{\mathbb{R}^3\times \mathbb{R}^3} (e^{i(\l , z_s + \a (z_s, v_s, \xi))}- e^{i(\l , z_s)})\s (|z_s - v_s|)\beta(|x-y|)\mu_s(dz\,|\,x) \mu_s(dv\,|\,y)\\
& = \frac{\beta(|x-y|)}{(2\pi)^{3/2}}\int_{\mathbb{R}^3\times \mathbb{R}^3} (e^{i(\l , z^*_s)}- e^{i(\l , z_s)})\s (|z^*_s - v^*_s|) \exp\{-1/2(|z_s|^2 + |v_s|^2)\}dz dv\\
&= \frac{\beta(|x-y|)}{(2\pi)^{3/2}}\int_{\mathbb{R}^3\times \mathbb{R}^3} (e^{i(\l , z_s)}-e^{i(\l , z^*_s)})\s (|z_s - v_s|) \exp\{-1/2(|z^*_s|^2 + |v^*_s|^2)\}dz dv
\end{align*}
by Proposition \ref{PropAppTanaka}; continuing, 
\begin{align}
& = \frac{\beta(|x-y|)}{(2\pi)^{3/2}}\int_{\mathbb{R}^3\times \mathbb{R}^3} (e^{i(\l , z_s)}-e^{i(\l , z^*_s)})\s (|z_s - v_s|) \exp\{-1/2(|z_s|^2 + |v_s|^2)\}dz dv\notag\\
&=-\int_{\mathbb{R}^3\times \mathbb{R}^3} (e^{i(\l , z_s + \a (z_s, v_s, \xi))}- e^{i(\l , z_s)})\s (|z_s - v_s|)\beta(|x-y|)g(z)dz g(v)dv 
\label{eqn4.3}
\end{align}
by using conservation of energy in \eqref{eqn1.3}, and with 
 $z^*$ and $v^*$ denoting  the post-collision velocities corresponding to the pre-collision velocities $z$ and $v$. It follows that  $I=-I$ and hence  $I=0$ so  that $\phi_t(\l ) = \phi_0(\l )$ for all $\l$  
at all times $t > 0$. 
\end{proof}
\par

\medskip
\noindent
{\bf Acknowledgements}: We are very grateful to Professors Anna de Masi, Alessandro Pellegrinotti, Errico Presutti, and Mario Pulvirenti for illuminating discussions and references during the conference "Interacting particle systems in thermodynamic models", 26-30 January 2015, at the Gran Sasso Science Institute (GSSI)  at L'Aquila sponsored by GSSI and the German Science Foundation DFG. We thank Martin Friesen for useful discussions in  the revised version. Last but not least we thank  the anonymous referee for pointing an error in the original manuscript, which led to a substantial improvement of this article. The support of the Hausdorff Center of Mathematics, University of Bonn, the Mathematics Department of Louisiana State University  and the Stochastic Group of the Bergische Universit\"at of  Wuppertal is also gratefully acknowledged.

\end{document}